\documentclass[letterpaper,12pt]{article}

\title{Parametric Integer Programming in Fixed Dimension}

\author{Friedrich Eisenbrand and Gennady Shmonin \medskip \\ {\small
  Institut f\"ur Mathematik, Universit\"at Paderborn, D-33095
  Paderborn, Germany}}
\date{}

\usepackage{amssymb,amsbsy,latexsym}
\usepackage{mathptmx}
\usepackage[scaled=0.92]{helvet}
\usepackage{courier}
\usepackage[latin1]{inputenc}
\usepackage[T1]{fontenc}

\usepackage[german,dutch,english]{babel}

\usepackage{amsmath,amscd,amsthm}
\usepackage[round,authoryear]{natbib}
\usepackage[dvips]{graphicx,epsfig,color}
\usepackage{pstricks}
\usepackage{pst-node} 
\newpsobject{showgrid}{psgrid}{subgriddiv=1,griddots=10,gridlabels=6pt}

\usepackage{paralist}

\usepackage[dvips,letterpaper,pdfstartview=FitH,unicode]{hyperref}
\usepackage{url}
\pdfadjustspacing=1
\newcommand\pdfhypersetup{
  \hypersetup{ bookmarksopen=false, colorlinks=false,
  pdfpagemode=UseOutlines, pdfstartview=FitH, pdftitle=\pdftitle,
  pdfauthor=\pdfauthor, pdfsubject=\pdfsubject,
  pdfkeywords=\pdfkeywords } }

\theoremstyle{plain}
\newtheorem{theorem}{Theorem}[section]
\newtheorem{lemma}[theorem]{Lemma}

\theoremstyle{definition}

\theoremstyle{remark}
\newtheorem*{remark}{Remark}

\providecommand{\abs}[1]{\lvert#1\rvert}

\providecommand{\coloneqq}{\mathrel{\mathop{:}}=}

\providecommand{\setZ}{\mathbb{Z}}
\providecommand{\setQ}{\mathbb{Q}}
\providecommand{\setR}{\mathbb{R}}

\renewcommand{\leq}{\leqslant}
\renewcommand{\geq}{\geqslant}

\newcommand{\size}{\mathrm{size}}

\def\pdftitle{Parametric Integer Programming in Fixed Dimension}
\def\pdfauthor{Friedrich Eisenbrand, Gennady Shmonin}
\def\pdfsubject{Paper draft}
\def\pdfkeywords{parametric integer programming; fixed dimension;
  integer programming gap}
\pdfhypersetup

\begin{document}
\newgray{vlg}{.75}
\newgray{vvlg}{.85}

\maketitle

 \begin{abstract}
   \noindent We consider the following problem: Given a rational
   matrix $A \in \setQ^{m \times n}$ and a rational polyhedron $Q
   \subseteq\setR^{m+p}$, decide if for all vectors $b \in \setR^m$,
   for which there exists an integral $z \in \setZ^p$ such that $(b,
   z) \in Q$, the system of linear inequalities $A x \leq b$ has an
   integral solution. We show that there exists an algorithm that
   solves this problem in polynomial time if $p$ and $n$ are
   fixed. This extends a result of Kannan (1990) who established such
   an algorithm for the case when, in addition to $p$ and $n$, the
   affine dimension of $Q$ is fixed. \medskip

   \noindent As an application of this result, we describe an
   algorithm to find the maximum difference between the optimum values
   of an integer program $\max \{ c x : A x \leq b, \, x \in \setZ^n
   \}$ and its linear programming relaxation over all right-hand sides
   $b$, for which the integer program is feasible. The algorithm is
   polynomial if $n$ is fixed. This is an extension of a recent result
   of Ho\c{s}ten and Sturmfels (2003) who presented such an algorithm
   for integer programs in standard form.
 \end{abstract}

\section{Introduction}
\label{sec:intro}

 Central to this paper is the following \emph{parametric integer
 linear programming} (\emph{PILP}) problem:
 \begin{quote}
   Given a rational matrix $A \in \setQ^{m \times n}$ and a rational
   polyhedron $Q \subseteq \setR^{m+p}$, decide if for all $b \in
   \setR^m$, for which there exists an integral $z \in \setZ^p$ such
   that $(b, z) \in Q$, the system of linear inequalities $A x \leq b$
   has an integral solution.
 \end{quote}
 In other words, we need to check that for all vectors $b$ in the set
 \[
   Q / \setZ^p \coloneqq \{ b \in \setQ^m : (b, z) \in Q \text{ for
   some } z \in \setZ^p \}
 \]
 the corresponding \emph{integer linear programming} problem $A x \leq
 b$, $x \in \setZ^n$ has a feasible solution. The set $Q / \setZ^p$ is
 called the \emph{integer projection} of $Q$. Using this notation, we
 can reformulate PILP as the problem of testing the following
 $\forall\,\exists$-sentence:
 \begin{equation}
   \label{eq:pilp}
   \forall b \in Q / \setZ^p \quad \exists x \in \setZ^n : \quad A x
   \leq b.
 \end{equation}
 It is worth noticing that any polyhedron $Q \subseteq \setR^m$ as
 well as the set of integral vectors in $Q$ can be expressed by means
 of integer projections of polyhedra. Indeed,
 \[
   Q = Q / \setZ^0 \quad \text{and} \quad Q \cap \setZ^m = \{ (b, b) :
   b \in Q \} / \setZ^m.
 \]

 In its general form, PILP belongs to the second level of the
 polynomial hierarchy and is $\Pi_2^p$-complete; see
 \citep{Stockmeyer1976} and \citep{Wrathall1976}. \cite{Kannan1990}
 presented a polynomial algorithm to decide the sentence
 \eqref{eq:pilp} in the case when $n$, $p$ and the affine dimension of
 $Q$ are fixed. This result was applied to deduce a polynomial
 algorithm that solves the Frobenius problem when the number of input
 integers is fixed, see \citep{Kannan1992}.

 Kannan's algorithm proceeds in several steps. We informally describe
 it at this point as a way to decide $\forall \, \exists$-statements
 \eqref{eq:pilp} in the case $p=0$. First Kannan provides an algorithm
 which partitions the set of right-hand sides $Q$ into polynomially
 many \emph{integer projections of partially open polyhedra} $S_1,
 \ldots, S_t$, where each $S_i$ is obtained from a higher-dimensional
 polyhedron by projecting out a fixed number of integer variables.
 Each $S_i$ is further equipped with a fixed number of mixed integer
 programs such that for each $b \in S_i$ the system $A x \leq b$ is
 integer feasible, if and only if one of the fixed number of
 ``candidate solutions'' obtained from plugging $b$ in these
 associated mixed integer programs, is a feasible integer point.

 To decide now whether \eqref{eq:pilp} holds, one searches within the
 sets $S_i$ individually for a vector $b$ for which $A x \leq b$ has
 \emph{no} integral solution. In other words, each of the candidate
 solutions associated to $b$ must violate at least one of the
 inequalities in $A x \leq b$. Since the number of candidate solutions
 is fixed, we can enumerate the choices to associate a violated
 inequality to each candidate solution. Each of these polynomially
 many choices yields now a mixed-integer program with a fixed number
 of integer variables. There exists a $b \in S_i$ such that $A x \leq
 b$ has no integral solution if and only if one of these mixed-integer
 programs is feasible. The latter can be checked with the algorithm of
 \cite{Lenstra1983} in polynomial time.

\subsection*{Contributions of this paper} 

 We modify the algorithm of Kannan to run in polynomial time under the
 assumption that only $n$ and $p$ are fixed. This is achieved via
 providing an algorithm that computes for a matrix $A \in \setQ^{m
 \times n}$ a set $D \subseteq \setZ^n$ of integral directions with
 the following property: for each $b \in \setR^m$, the lattice width
 (see Section \ref{sec:flatn-theor-integ}) of the polyhedron $P_b = \{
 x : A x \leq b \}$ is equal to the width of this polyhedron along one
 of the directions in $D$. This algorithm is described in Section
 \ref{sec:width} and runs in polynomial time if $n$ is fixed. The
 strengthening of Kannan's algorithm to decide $\forall \,
 \exists$-statements of the form \eqref{eq:pilp} if $n$ and $p$ is
 fixed follows then by using this result in the proof of Theorem 4.1
 in \citep{Kannan1992}.

 We then apply this result to find the maximum \emph{integer
 programming gap} for a family of integer programs. The integer
 programming gap of an integer program
 \begin{equation}
   \label{eq:ilp-intro}
   \max \{ c x : A x \leq b, \, x \in \setZ^n \}
 \end{equation}
 is the difference
 \[
   \max \{ c x : A x \leq b \} - \max \{ c x : A x \leq b, \, x \in
   \setZ^n \}.
 \]
 Given a rational matrix $A \in \setQ^{m \times n}$ and a rational
 objective vector $c \in \setQ^n$,  $g(A, c)$ denotes  the maximum
 integer programming gap of integer programs of the form
 \eqref{eq:ilp-intro}, where the maximum is taken over all vectors
 $b$, for which the integer program \eqref{eq:ilp-intro} is
 feasible. Our algorithm finds $g(A, c)$ in polynomial time if $n$ is
 fixed. This extends a recent result of \cite{HostenS2003}, who
 proposed an algorithm to find the maximum integer 
 programming gap for a family of integer programs in \emph{standard
 form} if $n$ is fixed. 

\subsection*{Related work}  

 Kannan's algorithm is an extension of the polynomial algorithm for
 integer linear programming in fixed dimension by \cite{Lenstra1983}.
 \cite{BarvinokW2003} presented an algorithm for counting integral
 points in the integer projection $Q / \setZ^p$ of a polytope $Q
 \subseteq \setR^{m+p}$. This algorithm runs in polynomial time if $p$
 and $m$ are fixed, and uses Kannan's partitioning algorithm, which we
 extend in this paper. In particular, their algorithm can be applied
 to count the number of elements of the minimal Hilbert basis of a
 pointed cone in polynomial time if the dimension is fixed. We remark
 that a polynomial test for the Hilbert basis property in fixed
 dimension was first presented by \cite{CookLS1984}. Extensions of
 Barvinok's algorithm to compute counting functions for parametric
 polyhedra were presented in \citep{BarvinokPommersheim99,MR2312001}
 and in \citep{koppe-2007}.  These counting functions are piecewise
 step-polynomials which involve roundup operations.  With these
 functions at hand one can very efficiently compute the number of
 integer points in $P_b$ via evaluation at $b$. It is however not
 known how to use such piecewise step-polynomials to decide
 $\forall\,\exists$-statements efficiently in fixed dimension.

 \cite{HostenS2003} proposed an algorithm to find the maximum integer
 programming gap for a family of integer programs in \emph{standard
 form}, i.e., $\max \{ c x : A x = b, \, x \geq 0, \, x \in \setZ^n
 \}$. Their algorithm exploits short rational generating functions for
 certain lattice point problems, cf. \cite{Barvinok1994} and
 \cite{BarvinokW2003}, and runs in polynomial time if the number $n$
 of columns of $A$ is fixed. However, the latter implies also a fixed
 number of rows in $A$, as we can always assume $A$ to have full row
 rank. We would like to point out that our approach does not rely on
 rational generating functions at all.

\subsection*{Basic definitions and notation}

 For sets $V$ and $W$ in $\setR^n$ and a number $\alpha$ we denote
 \[
   V + W \coloneqq \{ v + w : v \in V, \, w \in W \} \quad \text{and}
   \quad \alpha W \coloneqq \{ \alpha w : w \in W \}.
 \]
 It is easy to see that if $W$ is a convex set containing the origin
 and $\alpha \leq 1$, then $\alpha W \subseteq W$. If $V$ consists of
 one vector $v$ only, we write
 \[
   v + W \coloneqq \{ v + w : w \in W \}
 \]
 and say that $v + W$ is the \emph{translate} of $W$ \emph{along} the
 vector $v$. The symbol $\lceil \alpha \rceil$ denotes the smallest
 integer greater than or equal to $\alpha$, i.e., $\alpha$
 \emph{rounded up}. Similarly, $\lfloor \alpha \rfloor$ stands for the
 largest integer not exceeding $\alpha$, hence $\alpha$ \emph{rounded
 down}.

 In this paper we establish a number of \emph{polynomial algorithms},
 i.e., algorithms whose running time is bounded by a polynomial in the
 input size. Following the standard agreements, we define the
 \emph{size} of a rational number $\alpha = p / q$, where $p, q \in
 \setZ$ are relatively prime and $q > 0$, as the number of bits needed
 to write $\alpha$ in binary encoding:
 \[
   \size(\alpha) \coloneqq 1 + \lceil \log (\abs{p} + 1) \rceil +
   \lceil \log (q + 1) \rceil.
 \]
 The size of a rational vector $a = [ a_1, \ldots, a_n ]$ is the sum
 of the sizes of its components:
 \[
   \size(a) \coloneqq n + \sum_{i=1}^n \size(a_i).
 \]
 At last, the size of a rational matrix $A = [ a_{ij} ] \in \setQ^{m
 \times n}$ is
 \[
   \size(A) \coloneqq m n + \sum_{i=1}^m \sum_{j=1}^n \size(a_{ij}).
 \]

 An \emph{open half-space} in $\setR^n$ is the set of the form $\{ x :
 a x < \beta \}$, where $a \in \setR^n$ is a row-vector and $\beta$ is
 a number. Similarly, the set $\{ x : a x \leq \beta \}$ is called a
 \emph{closed half-space}. A \emph{partially open polyhedron} $P$ is
 the intersection of finitely many closed or open half-spaces. If $P$
 can be defined by means of closed half-spaces only, we say that it is
 a \emph{closed polyhedron}, or simply a \emph{polyhedron}. We need
 the notion of a partially open polyhedron to be able to partition the
 space (this is definitely impossible by means of closed polyhedra
 only). At last, we say that a partially open polyhedron is
 \emph{rational} if it can be defined by the system of linear
 inequalities with rational coefficients and rational right-hand
 sides.

 \emph{Linear programming} is about optimizing a linear function $c x$
 over a given polyhedron $P$ in $\setR^n$:
 \[
   \max \{ c x : x \in P \} = - \min \{ - c x : x \in P \}.
 \]
 If $x$ is required to be integral, it is an \emph{integer linear
 programming} problem
 \[
   \max \{ c x : x \in P \cap \setZ^n \} = - \min \{ - c x : x \in P
   \cap \setZ^n \}.
 \]
 For details on linear and integer programming, we refer to
 \citep{Schrijver1986}. Here we only mention that a linear programming
 problem can be solved in polynomial time, cf. \citep{Khachiyan1979},
 while integer linear programming is NP-complete. However, if the
 number of variables is fixed, integer programming can also be solved
 in polynomial time, as was shown by \cite{Lenstra1983}. Moreover,
 Lenstra presented an algorithm to solve \emph{mixed-integer
 programming} with a fixed number of integer variables. We remark that
 both algorithms---of \cite{Khachiyan1979} and of
 \cite{Lenstra1983}---can be used to solve \emph{decision versions} of
 integer and linear programming on \emph{partially open} polyhedra.

 An integral square matrix $U$ is called \emph{unimodular} if
 $\abs{\mathrm{det}(U)} = 1$. Clearly, if $U$ is unimodular, then
 $U^{-1}$ is also unimodular. A matrix of full row rank is said to be
 in \emph{Hermite normal form} if it has the form $[ H \,\,\, 0 ]$,
 where $H = [ h_{ij} ]$ is a square non-singular non-negative
 upper-triangular matrix such that $h_{ii} > h_{ij}$ for all $j >
 i$. Given a matrix $A$ of full row rank, we can find in polynomial
 time a unimodular matrix $U$ such that $A U$ is in Hermite normal
 form; see \citep{KannanB1979}. We remark that the Hermite normal form
 of an integral vector $c$ is the vector $\alpha e_1$, where $\alpha$
 is the greatest common divisor of the components of $c$ and $e_1$ is
 the first unit vector. The unimodular matrix $U$ such that $c U =
 \alpha e_1$ can be obtained directly while executing the Euclidean
 algorithm to compute the greatest common divisor.

\section{Flatness theorem}
\label{sec:flatn-theor-integ}

 We briefly review the algorithm to solve integer linear programming
 in fixed dimension, as its basic ideas will be used in the following
 sections. Intuitively, if a polyhedron contains no integral point,
 then it must be ``flat'' along some integral direction. In order to
 make this precise, we introduce the notion of ``lattice width.'' The
 \emph{width} $w_c(K)$ of a closed convex set $K$ along a direction $c
 \in \setR^n$ is defined as
 \begin{equation}
   \label{eq:width-c}
   w_c(K) \coloneqq \max \{ c x : x \in K \} - \min \{ c x : x \in K
   \}.
 \end{equation}
 The \emph{lattice width} $w(K)$ of $K$ (with respect to the
 \emph{standard lattice} $\setZ^n$) is the minimum of its widths along
 all non-zero integral directions:
 \[
   w(K) \coloneqq \min \{ w_c(K) : c \in \setZ^n \setminus \{ 0 \} \}.
 \]
 An integral row-vector $c$ attaining the above minimum is called a
 \emph{width direction} of the set $K$. Clearly, $w(v + \alpha K) =
 \alpha w(K)$ for any rational vector $v$ and any non-negative
 rational number $\alpha$. Moreover, both sets $K$ and $v + \alpha K$
 have the same width direction.

 Applications of the concept of lattice width in algorithmic number
 theory and integer programming rely upon the \emph{flatness theorem},
 which goes back to \cite{Khinchin1948} who first proved it for
 ellipsoids in $\setR^n$. Here we state it for \emph{convex bodies},
 i.e., bounded closed convex sets of non-zero volume.

 \begin{theorem}[Flatness theorem]
   \label{thm:flatness}
   There is a constant $\omega(n)$, depending only on $n$, such that
   any convex body $K \subseteq \setR^n$ with $w(K) \geq \omega(n)$
   contains an integral point.
 \end{theorem}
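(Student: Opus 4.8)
The plan is to reduce the theorem to the special case of ellipsoids, where it becomes a statement about lattices, and then to transfer the resulting bound to arbitrary convex bodies via the L\"owner--John ellipsoid.

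\emph{Reduction to ellipsoids.} By John's theorem, every convex body $K\subseteq\setR^n$ contains an ellipsoid $E$ with center $u$ such that $K\subseteq u+n\,(E-u)$. Since $w_d\bigl(u+n(E-u)\bigr)=n\,w_d(E)$ for every direction $d$ (widths are translation-invariant and scale linearly), the inclusions $E\subseteq K\subseteq u+n(E-u)$ give $w(E)\le w(K)\le n\,w(E)$, so in particular $w(E)\ge w(K)/n$. Moreover, any integral point of $E$ is an integral point of $K$, because $E\subseteq K$. Hence it suffices to produce a constant $\omega_0(n)$ such that every ellipsoid of lattice width at least $\omega_0(n)$ contains an integral point; the theorem then holds with $\omega(n)\coloneqq n\,\omega_0(n)$.

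\emph{The ellipsoid case.} Write $E=u+MB^n$, where $B^n$ is the Euclidean unit ball and $M$ is non-singular. A direct computation gives $w_c(E)=2\,\norm{M^\transpose c}$ for every $c$, hence
\[
  w(E)\;=\;2\min\{\norm{M^\transpose c}:c\in\setZ^n\setminus\{0\}\}\;=\;2\,\lambda_1(\Lambda),\qquad \Lambda\coloneqq M^\transpose\setZ^n,
\]
where $\lambda_1(\cdot)$ is the length of a shortest non-zero lattice vector. On the other hand, $E$ contains an integral point $z$ exactly when $\norm{M^{-1}(z-u)}\le 1$, i.e.\ when $M^{-1}u$ lies within Euclidean distance $1$ of the lattice $L\coloneqq M^{-1}\setZ^n$. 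Since $\Lambda$ is precisely the dual lattice $L^*$, it remains to bound the covering radius $\mu(L)$: the standard packing estimate $\mu(L)\le\tfrac12\sqrt n\,\lambda_n(L)$ combined with a transference inequality $\lambda_n(L)\,\lambda_1(L^*)\le\gamma(n)$ yields $\mu(L)\le\sqrt n\,\gamma(n)/w(E)$. Choosing $\omega_0(n)\coloneqq\sqrt n\,\gamma(n)$ forces $\mu(L)\le 1$, and then every translate of $MB^n$ --- in particular $E$ --- contains an integral point.

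\emph{Main obstacle.} The analytic core is the transference inequality between $\lambda_n(L)$ and $\lambda_1(L^*)$. Banaszczyk's theorem gives the sharp linear bound $\gamma(n)=n$ (so $\omega(n)=O(n^{5/2})$, and $O(n)$ with more care), but this is delicate; since the statement only asserts the \emph{existence} of some $\omega(n)$, it is enough to use the cruder polynomial bound obtained by applying Minkowski's second theorem to $L$ and to $L^*$, together with the identity $\det L\cdot\det L^*=1$, which keeps the argument self-contained. The remaining ingredients --- existence of the L\"owner--John ellipsoid for a body of non-zero volume, the elementary width and distance computations for $E=u+MB^n$, and the duality $\Lambda=L^*$ --- are routine. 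Alternatively, one may simply invoke the flatness theorem in this form, e.g.\ Khinchin's result for ellipsoids \citep{Khinchin1948} combined with the L\"owner--John theorem.
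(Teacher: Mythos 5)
The paper does not prove Theorem \ref{thm:flatness} at all: it is quoted as a classical result, attributed to \cite{Khinchin1948} for ellipsoids, with the best known bounds on $\omega(n)$ cited from \cite{BanaszczykLPS1999}. So there is nothing in the paper to compare against line by line; what matters is whether your sketch of the classical argument is sound, and it essentially is. The reduction via the L\"owner--John ellipsoid (legitimate here, since the paper's convex bodies have non-zero volume), the computation $w_c(E)=2\norm{M^\transpose c^\transpose}$, the identification of $M^\transpose\setZ^n$ with the dual of $L=M^{-1}\setZ^n$, and the final choice $\omega(n)=n\,\omega_0(n)$ are all correct, and this is the standard route to the theorem (Khinchin's ellipsoid case plus John's theorem, exactly as your last sentence says). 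Two points deserve tightening if you want the argument to be truly self-contained. First, Minkowski's second theorem applied to $L$ and $L^*$ together with $\det L\cdot\det L^*=1$ only bounds the full product $\prod_{i}\lambda_i(L)\,\lambda_i(L^*)$; to isolate $\lambda_n(L)\,\lambda_1(L^*)$ you also need the elementary transference lower bound $\lambda_i(L)\,\lambda_{n+1-i}(L^*)\geq 1$ (a non-orthogonal pair among $i$ and $n+1-i$ independent vectors has integral non-zero inner product) applied to the complementary indices, so that the remaining factors cannot be small. Second, the estimate $\mu(L)\leq\tfrac12\sqrt{n}\,\lambda_n(L)$ is a covering (not packing) bound whose usual proof is a short induction with orthogonal projections; the naive coefficient-rounding argument only gives $\mu(L)\leq\tfrac{n}{2}\lambda_n(L)$, which is anyway sufficient for the existence of some $\omega(n)$, and you may as well state that weaker bound and keep the proof elementary. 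With these two remarks incorporated, your proof is complete and yields a polynomial bound on $\omega(n)$, which is all the paper ever uses.
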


 \noindent The constant $\omega(n)$ in Theorem \ref{thm:flatness} is
 referred to as the \emph{flatness constant}. The best known
 \emph{upper bound} on $\omega(n)$ is $O ( n^{3/2} )$, cf.
 \citep{BanaszczykLPS1999}, although a linear dependence on $n$ was
 conjectured, e.g., by \cite{KannanL1988}. A linear \emph{lower bound}
 on $\omega(n)$ was shown by \cite{Kantor1999} and \cite{Sebo1999}.

 Throughout this paper we will mostly deal with rational polyhedra
 rather than general convex bodies. In this case, assumptions of
 non-zero volume and boundedness can safely be removed from the
 theorem's statement. Indeed, if $P \subseteq \setR^n$ is a rational
 polyhedron of zero volume, then it has width $0$ along an integral
 direction orthogonal to its (rational) affine hull. Further, let $C$
 be the characteristic cone of $P$:
 \[
   C \coloneqq \{ y : x + y \in P \text{ for all } x \in P \}.
 \]
 If $C = \{ 0 \}$, then $P$ is already bounded. If $C$ is
 full-dimensional, then the set $x + C$ trivially contains an integral
 point, for any $x \in P$ (we can always allocate a unit box inside a
 full-dimensional cone). At last, if $C$ is not full dimensional, then
 we can choose a sufficiently large box $B \subseteq \setR^n$ such
 that $w(P) = w(P \cap B)$ and both $P$ and $P \cap B$ have the same
 width direction, which is orthogonal to the (rational) affine hull of
 $C$. If $w(P) \geq \omega(n)$, then $P \cap B$, and hence $P$,
 contains an integral point by Theorem \ref{thm:flatness}.

 How can we use this theorem to check whether a given rational
 polyhedron contains an integral point? The answer is in the following
 lemma, which is almost a direct consequence of the flatness theorem.

 \begin{lemma}
   \label{lem:flatness-app}
   Let $P \subseteq \setR^n$ be a rational polyhedron of finite
   lattice width and let $c$ be its width direction. Let
   \begin{equation}
     \label{eq:beta-intro}
     \beta \coloneqq \min \{ c x : x \in P \}.
   \end{equation}
   Then $P$ contains an integral point if and only if the polyhedron
   \[
     P \cap \{ x : \beta \leq c x \leq \beta + \omega(n) \}
   \]
   contains an integral point.
 \end{lemma}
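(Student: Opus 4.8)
The plan is to prove the two implications separately: one is trivial, and the other will follow by applying the flatness theorem to a suitably shrunken copy of $P$.

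The implication ``if $P \cap \{ x : \beta \leq c x \leq \beta + \omega(n) \}$ contains an integral point, then so does $P$'' is immediate, because that set is contained in $P$. So set $S \coloneqq P \cap \{ x : \beta \leq c x \leq \beta + \omega(n) \}$ and concentrate on producing an integral point in $S$ whenever $P$ has one. Since $c$ is a width direction of $P$ and $P$ has finite lattice width, the function $x \mapsto c x$ is bounded on $P$ (between $\beta$ and $\beta + w_c(P) = \beta + w(P)$), so $\beta$ is attained at some $x_0 \in P$. If $w(P) < \omega(n)$, then $c x \leq \beta + w(P) < \beta + \omega(n)$ for every $x \in P$, hence $S = P$ and there is nothing to do. So assume $w(P) \geq \omega(n)$, and, invoking the reductions described just before the lemma (finite lattice width precludes a full-dimensional characteristic cone; the zero-volume case has $w(P) = 0 < \omega(n)$; an unbounded $P$ may be replaced by a polytope $P \cap B \subseteq P$ containing $x_0$ and having the same lattice width, width direction, and value $\beta$, so that an integral point of the slab for $P \cap B$ is one for $P$), assume that $P$ is a convex body.

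Now I would put $\alpha \coloneqq \omega(n) / w(P) \in (0, 1]$ and consider the convex body $K \coloneqq \alpha P + (1 - \alpha) x_0 = \{ \alpha x + (1 - \alpha) x_0 : x \in P \}$, i.e., $P$ shrunk towards $x_0$ by the ratio $\alpha$. Then: (i) $w(K) = \alpha\, w(P) = \omega(n)$; (ii) $K \subseteq P$, since every point of $K$ is a convex combination of $x \in P$ and $x_0 \in P$; and (iii) for every $x \in P$ one has $\beta = \alpha \beta + (1 - \alpha)\beta \leq \alpha(c x) + (1 - \alpha)\beta \leq \alpha(\beta + w(P)) + (1 - \alpha)\beta = \beta + \omega(n)$, and since $c\bigl(\alpha x + (1 - \alpha)x_0\bigr) = \alpha(c x) + (1 - \alpha)\beta$, this gives $K \subseteq \{ x : \beta \leq c x \leq \beta + \omega(n) \}$. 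From (ii) and (iii), $K \subseteq S$; and by (i) the flatness theorem (Theorem~\ref{thm:flatness}), applied to $K$, yields an integral point of $K$, hence of $S$.

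The one idea that really makes this work is to take $x_0$ to be a point minimizing $c x$ over $P$: shrinking $P$ towards that particular point is exactly what drops the whole body into the slab $\{ \beta \leq c x \leq \beta + \omega(n) \}$ of width $\omega(n)$. There is essentially no obstacle beyond this; the only subtlety worth noting is that the ratio $\alpha$ is chosen so that $w(K)$ equals the flatness constant \emph{exactly} (taking it any smaller would defeat the appeal to Theorem~\ref{thm:flatness}), and that $\alpha$ may be irrational---which is harmless, since Theorem~\ref{thm:flatness} is stated for arbitrary convex bodies.
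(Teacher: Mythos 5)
Your proof is correct and follows essentially the same route as the paper: the paper also shrinks $P$ towards a minimizer $y$ of $cx$ by the factor $\omega(n)/w(P)$ (its $P' = y + \frac{\omega(n)}{w(P)}(P-y)$ is exactly your $K$), notes $P' \subseteq P$ with $\min\{cx : x\in P'\}=\beta$ and $w(P')=\omega(n)$, and applies the flatness theorem. Your explicit handling of the unbounded/degenerate cases just restates the discussion the paper gives immediately before the lemma, so the two arguments coincide in substance.
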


 \begin{figure}[ht]
   \centering {
     \psset{xunit=.5cm,yunit=.4cm}
     \begin{pspicture}(0,-5)(12,8)       
       \pspolygon[fillstyle=solid,fillcolor=vvlg]
         (0,0)(3,6)(6,7.5)(12,0)(6,-3)(3,-3)
       \psline(0,-4)(0,8)
       \rput[l](0,-4.5){$cx = \beta$}

       \pspolygon[fillstyle=solid,fillcolor=lightgray]
         (0,0)(1,2)(2,2.5)(4,0)(2,-1)(1,-1)
       \psline(4,-4)(4,8)
       \rput[l](4,-4.5){$cx = \beta+\omega(n)$}
       \rput(3,0){$P'$}
       \rput(10,0){$P$}
     \end{pspicture}
   }
   \caption{Illustration for the proof of Lemma
     \ref{lem:flatness-app}}
   \label{fig:1}
 \end{figure}
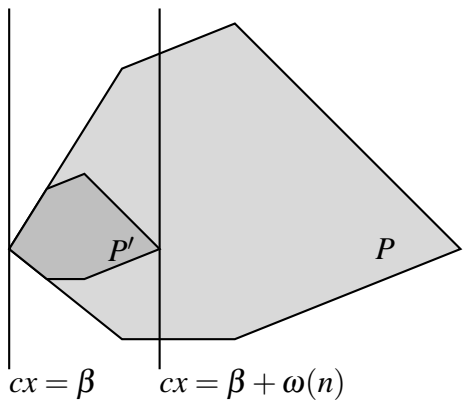

 \begin{proof}
   If $w(P) < \omega(n)$, then there is nothing to prove, since
   \[
     P \subseteq \{ x : \beta \leq c x < \beta + \omega(n) \}.
   \]
   Suppose that $w(P) \geq \omega(n)$ and let $P = y + Q$, where $y$
   is an optimum solution of the linear program \eqref{eq:beta-intro}
   and $Q$ is the polyhedron containing the origin,
   \[
     Q \coloneqq \{ x - y : x \in P \}.
   \]
   We denote
   \[
     Q' \coloneqq \textstyle \frac{\omega(n)}{w(P)} Q \quad \text{and}
     \quad P' \coloneqq y + Q'.
   \]
   In other words, $Q$ is $P$ translated to contain the origin, $Q'$
   is obtained from $Q$ by scaling it down, and $P'$ is $Q'$
   translated back to the original position. It is easy to see that
   \[
     \min \{ c x : x \in P' \} = c y = \beta.
   \]
   Since $\frac{\omega(n)}{w(P)} \leq 1$ and $Q$ is convex, we have
   $Q' \subseteq Q$. This implies $P' \subseteq P$. Yet, we have $w(P)
   = w(Q)$, and therefore, $w(P') = w(Q') = \omega(n)$.

   By Theorem \ref{thm:flatness}, $P'$ contains an integral point, say
   $z$. But then $z$ also belongs to $P$ and
   \[
     c z \leq \max \{ c x : x \in P' \} = \beta + \omega(n).
   \]
   This completes the proof.
 \end{proof}

 \noindent Suppose that we know a width direction $c$ of a polyhedron
 \begin{equation}
   \label{eq:polyhedron-intro}
   P = \{ x : A x \leq b \} \subseteq \setR^n.
 \end{equation}
 Since $c$ is integral, the scalar product $c x$ must be an integer
 for any integral point $x \in P$. Together with Lemma
 \ref{lem:flatness-app}, it allows us to split the original problem
 into $\omega(n) + 1$ integer programming problems on
 lower-dimensional polyhedra
 \[
   P \cap \{ x : c x = \lceil \beta \rceil + j \}, \quad j = 0,
   \ldots, \omega(n)
 \]
 where $\beta$ is defined by \eqref{eq:beta-intro}.

 The components of $c$ must be relatively prime, as otherwise we could
 scale $c$, obtaining a smaller width of $P$. Therefore its Hermite
 normal form is a unit row-vector $e_1$. We can find a unimodular
 matrix $U$ such that $c U = e_1$, introduce new variables $y
 \coloneqq U^{-1} x$ and rewrite the original system of linear
 inequalities $A x \leq b$ in the form $A U y \leq b$. Since $U$ is
 unimodular, the system $A x \leq b$ has an integral solution if and
 only if the system $A U y \leq b$ has an integral solution. But the
 equation $c x = \lceil \beta \rceil + j$ turns into $e_1 y = \lceil
 \beta \rceil + j$. Thus, the first component of $y$ can be
 eliminated. All together, we can proceed with a constant number of
 integer programming problems with a smaller number of variables. If
 $n$ is fixed, this yields a polynomial algorithm.

 An attempt to generalize this approach for the case of varying $b$
 gives rise to the following problems. First, the width directions of
 the polyhedron \eqref{eq:polyhedron-intro} depend on $b$ and
 therefore can also vary. Furthermore, even if a width direction $c$
 remains the same, it is not a trivial task to proceed
 recursively. The point is that $\beta$, as it is defined in
 \eqref{eq:beta-intro}, also depends on $b$ and the hyper-planes $\{ x
 : c x = \lceil \beta \rceil + j \}$ are not easy to construct with
 $\beta$ being a \emph{function} of $b$. In the following sections we
 basically resolve these two problems and adapt the above algorithm
 for the case of varying $b$.

\section{Lattice width of a parametric polyhedron}
\label{sec:width}

 A rational \emph{parametric polyhedron} $P$ defined by a matrix $A
 \in \setQ^{m \times n}$ is the family of polyhedra of the form
 \[
   P_b \coloneqq \{ x : A x \leq b \},
 \]
 where the right-hand side $b$ is allowed to vary over $\setR^m$. We
 restrict our attention only to those $b$, for which $P_b$ is
 non-empty. For each such $b$, there is a width direction $c$ of the
 polyhedron $P_b$. We aim to find a small set $C$ of non-zero integral
 directions such that
 \[
   w(P_b) = \min \{ w_c(P_b) : c \in C \}
 \]
 for all vectors $b$ for which $P_b$ is non-empty. Further on, the
 elements of the set $C$ are referred to as \emph{width directions} of
 the parametric polyhedron $P$. It turns out that such a set can be
 computed in polynomial time when the number of columns in $A$ is
 fixed.

 Let $A \in \setQ^{m \times n}$ be a matrix of full column rank. Given
 a subset of indices
 \[
   N = \{ i_1, \ldots, i_n \} \subseteq \{ 1, \ldots, m \},
 \]
 we denote by $A_N$ the matrix composed of the rows $i_1, \ldots, i_n$
 of $A$. We say that $N$ is a \emph{basis} of $A$ if $A_N$ is
 non-singular. Clearly, any matrix of full column rank has at least
 one basis. Each basis $N$ defines a linear transformation
 \begin{equation}
   \label{eq:fn}
   F_N : \setR^m \to \setR^n, \quad F_N b = A_N^{-1} b_N,
 \end{equation}
 which maps right-hand sides $b$ to the corresponding \emph{basic
 solutions}. We can view $F_N$ as an $n \times m$-matrix of rational
 numbers. If the point $F_N b$ satisfies the system $A x \leq b$, then
 it is a vertex of the polyhedron $\{ x : A x \leq b \}$. From linear
 programming duality we know that the optimum value of any feasible
 linear program
 \[
   \max \{ c x : A x \leq b \}
 \]
 is finite if and only if there is a basis $N$ such that $c = y A_N$
 for some row-vector $y \geq 0$. In other words, $c$ must belong to
 the cone generated by the rows of matrix $A_N$. Moreover, if it is
 finite, there is a basis $N$ such that the optimum value is attained
 at $F_N b$. It gives us the following simple lemma.

 \begin{lemma}
   \label{lem:infinite-width}
   Let $P$ be a parametric polyhedron defined by a rational matrix
   $A$. If there exists a vector $b'$ such that the polyhedron
   \[
     P_{b'} = \{ x : A x \leq b' \}
   \]
   has infinite lattice width, then $w(P_b)$ is infinite for all $b$.
 \end{lemma}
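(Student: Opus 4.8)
The plan is to reduce the claim to the elementary fact, recalled just before the lemma, that the (un)boundedness of a \emph{feasible} linear program $\max \{ c x : A x \leq b \}$ does not depend on the right-hand side $b$. Indeed, by linear programming duality this maximum is finite if and only if $c = y A_N$ for some basis $N$ and some $y \geq 0$, that is, if and only if $c$ lies in the cone generated by the rows of $A$; and this condition mentions only $A$ and $c$. (Equivalently, one can argue through the characteristic cone $C = \{ y : A y \leq 0 \}$, which is the same for every non-empty polyhedron $P_b$, using that $\max \{ c x : x \in P_b \}$ is finite precisely when $c y \leq 0$ for all $y \in C$.)

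First I would unwind the hypothesis. By definition, $w(P_{b'}) = \min \{ w_c(P_{b'}) : c \in \setZ^n \setminus \{ 0 \} \}$ is infinite exactly when $w_c(P_{b'}) = \infty$ for every non-zero integral $c$, and---since $P_{b'}$ is non-empty---the quantity $w_c(P_{b'}) = \max \{ c x : x \in P_{b'} \} - \min \{ c x : x \in P_{b'} \}$ is infinite if and only if $\max \{ c x : x \in P_{b'} \} = +\infty$ or $\max \{ - c x : x \in P_{b'} \} = +\infty$. Now fix an arbitrary $b$ with $P_b$ non-empty and an arbitrary non-zero integral $c$. In the first case the feasible program $\max \{ c x : A x \leq b' \}$ is unbounded, so $c$ does not lie in the cone generated by the rows of $A$; since this membership condition is independent of the right-hand side and $P_b$ is non-empty, the program $\max \{ c x : A x \leq b \}$ is likewise feasible and unbounded, whence $w_c(P_b) = \infty$. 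In the second case the same reasoning applied to the direction $- c$ gives $\max \{ - c x : x \in P_b \} = +\infty$, and again $w_c(P_b) = \infty$. As $c$ was arbitrary, $w(P_b) = \infty$.

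There is essentially no obstacle here; the only slightly delicate points are the sign bookkeeping when passing from $c$ to $- c$, and the convention---consistent with the surrounding text---that ``for all $b$'' is to be read as ``for all $b$ with $P_b$ non-empty'', so that infinite lattice width is not conflated with the degenerate case $P_b = \emptyset$.
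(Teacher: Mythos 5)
Your proposal is correct and rests on the same key fact as the paper's own proof: by linear programming duality, boundedness of $\max \{ c x : A x \leq b \}$ over a non-empty $P_b$ depends only on $A$ and $c$ (membership of $c$ in a cone determined by the rows of $A$), not on the right-hand side $b$. The paper simply runs this in contrapositive form---a finite $w(P_b)$ for some $b$, witnessed by a width direction $c$ lying in the two cones $C_1$ and $C_2$ of \eqref{eq:cones-width}, forces $w_c(P_{b'})$ to be finite---whereas you argue directly over all non-zero integral $c$; the two are interchangeable.
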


 \begin{proof}
   Suppose that the lattice width of $P_b$ is finite for some $b$ and
   let $c$ be a width direction. Then both linear programs
   \[
     \max \{ c x : A x \leq b \} \quad \text{and} \quad \min \{ c x :
     A x \leq b \}
   \]
   are bounded and therefore there are bases $N_1$ and $N_2$ of $A$
   such that $c$ belongs to both cones
   \begin{equation}
     \label{eq:cones-width}
     C_1 \coloneqq \{ y A_{N_1} : y \geq 0 \} \quad \text{and} \quad
     C_2 \coloneqq \{ - y A_{N_2} : y \geq 0 \}
   \end{equation}
   generated by the rows of matrices $A_{N_1}$ and $- A_{N_2}$,
   respectively. But then the linear programs
   \[
     \max \{ c x : A x \leq b' \} \quad \text{and} \quad \min \{ c x :
     A x \leq b' \}
   \]
   are also bounded, whence $w_c(P_{b'})$ is finite.
 \end{proof}

 \noindent The above lemma shows that finite lattice width is a
 property of the matrix $A$. In particular $P_0$ has finite lattice
 width if and only if $P_b$ has finite lattice width for all
 $b$. Conversely, if $P_0$ has infinite lattice width, then $P_b$ also
 has infinite lattice width and therefore contains an integral point
 for all $b$. We can easily recognize whether $P_0$ has infinite
 lattice width. For instance, we can enumerate all possible pairs of
 bases $N_1$ and $N_2$ and check if the cones \eqref{eq:cones-width}
 have a common integral vector. Further we shall not deal with this
 ``trivial'' case and shall consider only those parametric polyhedra,
 for which $w(P_0)$ is finite, and therefore $w(P_b)$ is finite for
 any $b$. We say in this case that the parametric polyhedron $P$ has
 \emph{finite lattice width}.

 \begin{figure}[ht]
   \centering {
     \psset{unit=.6cm}
     \begin{pspicture}(-3,-5)(12.5,3)    
       \pspolygon[linestyle=dashed,linecolor=lightgray,
         fillcolor=vvlg,fillstyle=solid]
         (0,0)(1.5,1.5)(3,2)(8,2)(10,.5)(11,-1)(10,-2.5)(4,-2)(1.5,-1)

       \psline(0,0)(3,3)
       \psline(0,0)(3,-2)

       \pspolygon[linecolor=vlg,fillcolor=vlg,fillstyle=solid]
         (0,0)(-3,3)(-3,-4.5)
       \psline[linecolor=vlg](0,0)(-3,3)
       \psline[linecolor=vlg](0,0)(-3,-4.5)
       \psline[arrows=->](0,0)(-2,0)

       \psline(11,-1)(9,2)
       \psline(11,-1)(9,-4)

       \pspolygon[linecolor=vlg,fillcolor=vlg,fillstyle=solid]
         (11,-1)(8,-3)(8,1)
       \psline[linecolor=vlg](11,-1)(8,-3)
       \psline[linecolor=vlg](11,-1)(8,1)
       \psline[arrows=->](11,-1)(9,-1)

       \rput(-2.5,1.5){$C_1$}
       \rput(-1.5,-.5){$c$}
       \rput(8.5,0){$C_2$}
       \rput(9.5,-1.5){$c$}

       \rput(5,1.5){$P_b$}

       \rput[l](.5,0){$F_{N_1}b$}
       \rput[l](11.5,-1){$F_{N_2}b$}

       \psdots(0,0)(11,-1)
     \end{pspicture}
   }
   \caption{The width direction $c$ and the two cones $C_1$ and
     $C_2$.}
   \label{fig:2}
 \end{figure}
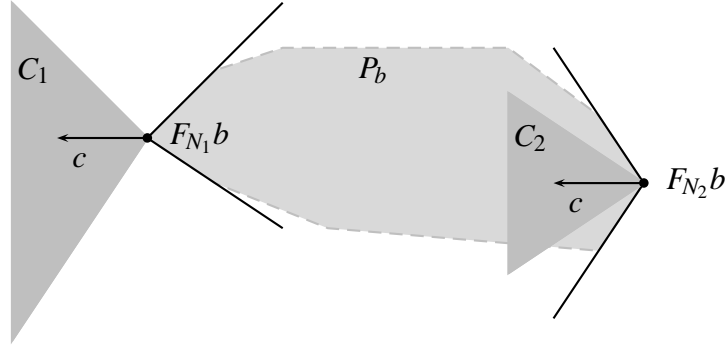

 Now, suppose that $P_b$ is non-empty and let $c$ be its width
 direction. Then there are two bases $N_1$ and $N_2$ such that
 \begin{equation}
   \label{eq:max-min-width}
   \max \{ c x : A x \leq b \} = c F_{N_1} b \quad \text{and} \quad
   \min \{ c x : A x \leq b \} = c F_{N_2} b
 \end{equation}
 and $c$ belongs to the cones $C_1$ and $C_2$ defined by
 \eqref{eq:cones-width}, see Figure~\ref{fig:2}. In fact, equations
 \eqref{eq:max-min-width} hold for any vector $c$ in $C_1 \cap
 C_2$. Thus, the lattice width of $P_b$ is equal to the optimum value
 of the following optimization problem:
 \begin{equation}
   \label{eq:c-ilp}
   \min \{ c (F_{N_1} - F_{N_2}) b : c \in C_1 \cap C_2 \cap \setZ^n
   \setminus \{ 0 \} \}.
 \end{equation}
 The latter can be viewed as an integer programming problem. Indeed,
 the cones $C_1$ and $C_2$ can be represented by some systems of
 linear inequalities, say $c D_1 \leq 0$ and $c D_2 \leq 0$,
 respectively, where $D_1,D_2\in \setZ^{n \times n}$. The minimum
 \eqref{eq:c-ilp} is taken over all integral vectors $c$ satisfying $c
 D_1 \leq 0$ and $c D_2 \leq 0$, except the origin. Since both cones
 $C_1$ and $C_2$ are simplicial, i.e., generated by $n$ linearly
 independent vectors, the origin is a vertex of $C_1 \cap C_2$ and
 therefore can be cut off by a single inequality, for example, $c D_1
 \mathbf{1} \leq -1$, where $\mathbf{1}$ denotes the $n$-dimensional
 all-one vector. It is important that all other integral vectors $c$
 in $C_1 \cap C_2$ satisfy this inequality and therefore remain
 feasible. Thus, the problem \eqref{eq:c-ilp} can be rewritten as
 \[
   \min \{ c (F_{N_1} - F_{N_2}) b : c D_1 \leq 0, \, c D_2 \leq 0, \,
   c D_1 \mathbf{1} \leq -1 \, c \in \setZ^n \}.
 \]
 For a given $b$, this is an integer programming problem. Therefore,
 the optimum value of \eqref{eq:c-ilp} is attained at some vertex of
 the integer hull of the underlying polyhedron
 \begin{equation}
   \label{eq:c-ilp-2}
   \{ c : c D_1 \leq 0, \, c D_2 \leq 0, \, c D_1 \mathbf{1} \leq -1
   \}
 \end{equation}
 \cite{Shevchenko1981} and \cite{HayesL1983} proved that the number of
 vertices of the integer hull of a rational polyhedron is polynomial
 in fixed dimension. Tight bounds for number were presented in
 \citep{CookHKM1992} and \citep{BaranyHoweLovasz92}. This gives rise to
 the next lemma.

 \begin{lemma}
   \label{lem:flat-dirs}
   There is an algorithm that takes as input a rational matrix $A \in
   \setQ^{m \times n}$ of full column rank, which defines a parametric
   polyhedron $P$ of finite lattice width, and computes a set of
   triples $(F_i, G_i, c_i)$ of rational linear transformations $F_i,
   G_i : \setR^m \to \setR^n$ and a non-zero integral row-vector $c_i
   \in \setZ^n$ ($i = 1, \ldots, t$) satisfying the following
   properties. For all $b$, for which $P_b$ is non-empty,
   \begin{enumerate}[(a)]
     \item \label{item:flat-dirs-a}
       $F_i$ and $G_i$ provide, respectively, an upper and lower bound
       on the value of the linear function $c_i x$ in $P_b$, i.e., for
       all $i$,
       \[
         c_i G_i b \leq \min \{ c_i x : x \in P_b \} \leq \max \{ c_i
         x : x \in P_b \} \leq c_i F_i b,
       \]
     \item \label{item:flat-dirs-b}
       the lattice width of $P_b$ is attained along the direction
       $c_i$ for some $i \in \{ 1, \ldots, t \}$ and can be expressed
       as
       \[
         w(P_b) = \min_i \; c_i (F_i - G_i) b.
       \]
     \item \label{item:flat-dirs-c}
       The number $t$ of the triples satisfies the bound
       \begin{equation}
         \label{eq:t-bound}
         t \leq 2 m^{2 n} ( 2 n + 1 )^n ( 24 n^5 \phi )^{n-1},
       \end{equation}
       where $\phi$ is the maximum size of a column in $A$.
   \end{enumerate}
   The algorithm runs in polynomial time if $n$ is fixed.
 \end{lemma}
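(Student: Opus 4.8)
The plan is to enumerate all pairs of bases $(N_1, N_2)$ of $A$ and, for each pair, to compute the vertices of the integer hull of the polyhedron \eqref{eq:c-ilp-2} associated with the cones $C_1 = \{ y A_{N_1} : y \geq 0 \}$ and $C_2 = \{ -y A_{N_2} : y \geq 0 \}$. For a fixed pair, I would set $F_i \coloneqq F_{N_1}$, $G_i \coloneqq F_{N_2}$, and let $c_i$ range over the finitely many vertices of the integer hull of \eqref{eq:c-ilp-2}. This yields the list of triples $(F_i, G_i, c_i)$, $i = 1, \ldots, t$.

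To establish \eqref{item:flat-dirs-a}, observe that for any $c \in C_1$ the linear program $\max \{ c x : A x \leq b \}$ has its optimum attained at the basic solution $F_{N_1} b$ whenever this point is feasible; in general, by linear programming duality, $\max \{ c x : A x \leq b \} \leq y b$ for any dual-feasible $y \geq 0$ with $y A = c$, and taking $y$ supported on $N_1$ gives $\max \{ c x : A x \leq b \} \leq c A_{N_1}^{-1} b_{N_1} = c F_{N_1} b$; symmetrically for the lower bound via $C_2$. Since every vertex of the integer hull of \eqref{eq:c-ilp-2} lies in $C_1 \cap C_2$, property \eqref{item:flat-dirs-a} holds for each $c_i$. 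For \eqref{item:flat-dirs-b}, fix $b$ with $P_b \neq \emptyset$ and let $c^{*}$ be a width direction of $P_b$; as discussed in the text, both $\max$ and $\min$ of $c^{*} x$ over $P_b$ are finite, so there exist bases $N_1, N_2$ with $c^{*} \in C_1 \cap C_2$ and $w(P_b) = c^{*}(F_{N_1} - F_{N_2}) b$. Thus $c^{*}$ is a feasible point of the integer program \eqref{eq:c-ilp} for this pair, so the integer program has optimum value at most $w(P_b)$, attained at some vertex $c_i$ of the integer hull of \eqref{eq:c-ilp-2}; and by the bound in \eqref{item:flat-dirs-a}, $c_i(F_i - G_i) b \geq w_{c_i}(P_b) \geq w(P_b)$, forcing equality $w(P_b) = c_i (F_i - G_i) b = \min_j c_j (F_j - G_j) b$ (the last equality because no $c_j$ can give a value below $w(P_b)$, again by \eqref{item:flat-dirs-a}).

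For the count in \eqref{item:flat-dirs-c}: there are at most $\binom{m}{n} \leq m^n$ choices for each of $N_1$ and $N_2$, hence at most $m^{2n}$ pairs; the factor $2$ and the remaining factors must come from bounding the number of vertices of the integer hull of \eqref{eq:c-ilp-2} in terms of $n$ and the encoding size of its defining inequalities, using the results of \cite{CookHKM1992} and \cite{BaranyHoweLovasz92}. This requires estimating the bit-size of the entries of $D_1, D_2$ (integral normal vectors obtained from $A_{N_1}^{-1}, A_{N_2}^{-1}$, whose sizes are polynomial in $\phi$ by Cramer's rule and Hadamard's bound) and plugging these into the vertex bound, which grows polynomially in the size for fixed $n$. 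The polynomial running time for fixed $n$ follows because enumerating bases, computing Hermite normal forms / inverses, and enumerating integer-hull vertices are all polynomial in fixed dimension. The main obstacle I anticipate is the last step: making the constant in \eqref{eq:t-bound} explicit, i.e., carefully propagating the size bound on $A_{N_j}^{-1}$ through the inequality description of \eqref{eq:c-ilp-2} and then through the known vertex-count bounds, to arrive at exactly $( 2 n + 1 )^n ( 24 n^5 \phi )^{n-1}$ per pair; everything else is routine linear-programming duality and integer-hull machinery.
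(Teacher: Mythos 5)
Your proposal is correct and takes essentially the same route as the paper: enumerate the at most $m^{2n}$ pairs of bases, describe $C_1 \cap C_2$ with the origin cut off by one extra inequality, take as width directions the vertices of the integer hull of \eqref{eq:c-ilp-2}, and output the triples $(F_{N_1}, F_{N_2}, c)$, with (a) following from weak LP duality and (b) from feasibility of the true width direction in \eqref{eq:c-ilp}. The one step you leave open, the explicit constant in \eqref{eq:t-bound}, is settled in the paper exactly along the path you sketch: each of the $2n+1$ inequalities of \eqref{eq:c-ilp-2} has size at most $4n^3\phi$ \citep[Theorem~10.2]{Schrijver1986}, so the vertex bound of \citep{CookHKM1992} gives at most $2(2n+1)^n(24n^5\phi)^{n-1}$ vertices per pair, computable in polynomial time for fixed $n$ by \citep{Hartmann1989}.
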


 \begin{proof}
   In the first step of the algorithm we enumerate all possible bases
   of $A$. Observe that there is at least one basis, since $A$ is of
   full column rank. On the other hand, the total number of possible
   bases is at most $m^n$. Hence, the number of possible pairs of
   bases is bounded by $m^{2 n}$. The algorithm iterates over all
   unordered pairs of bases and for each such pair $\{ N_1, N_2 \}$
   does the following.

   Let $C_1$ and $C_2$ be the corresponding simplicial cones, defined
   by \eqref{eq:cones-width}. These cones can be represented by
   systems of linear inequalities, $c D_1 \leq 0$ and $c D_2 \leq 0$
   respectively, where $D_1, D_2 \in \setZ^{n \times n}$ and the size
   of each inequality is bounded by $4 n^2 \phi$, see \cite[Theorem
   10.2]{Schrijver1986}. As the origin is a vertex of the cone $C_1
   \cap C_2$, it can be cut off by a single inequality; for example,
   $c D_1 \mathbf{1} \leq -1$, where $\mathbf{1}$ stands for the
   $n$-dimensional all-one vector. The size of the latter inequality
   is bounded by $4 n^3 \phi$.

   Thus, there are exactly $2 n + 1$ inequalities in
   \eqref{eq:c-ilp-2} and the size of each inequality is bounded by $4
   n^3 \phi$. This implies that the number of vertices of the integer
   hull of \eqref{eq:c-ilp-2} is at most $2 (2 n + 1)^n (24 n^5
   \phi)^{n-1}$, cf. \citep{CookHKM1992}, and they all can be computed
   in polynomial time if $n$ is fixed, cf. \citep{Hartmann1989}. The
   algorithm then outputs the triple $(F_{N_1}, F_{N_2}, c)$ for each
   vertex $c$ of the integer hull of \eqref{eq:c-ilp-2}, where
   $F_{N_1}$ and $F_{N_2}$ are the linear transformations defined by
   \eqref{eq:fn}. Since there are at most $m^{2 n}$ unordered pairs of
   bases and, for each pair, the algorithm returns at most $2 (2 n +
   1)^n (24 n^5 \phi)^{n-1}$ triples, the total number of triples
   satisfies \eqref{eq:t-bound}, as required. Parts
   \eqref{item:flat-dirs-a} and \eqref{item:flat-dirs-b} of the
   theorem follow directly from our previous explanation.
 \end{proof}

 \noindent The bound \eqref{eq:t-bound} can be rewritten as
 \[
   t = O (m^{2 n} \phi^{n-1})
 \]
 for fixed $n$. Clearly, the greatest common divisor of the components
 of any direction $c_i$ obtained by the algorithm must be equal to
 $1$, as otherwise it would not be a vertex of the integer hull of
 \eqref{eq:c-ilp-2}. This implies, in particular, that the Hermite
 normal form of any of these vectors is just the first unit vector
 $e_1 \in \setR^n$.

 It is also worth mentioning that if $(F_i, G_i, c_i)$ is a triple
 attaining the minimum in Part \eqref{item:flat-dirs-b} of Lemma
 \ref{lem:flat-dirs}, then we have
 \[
   w(P_b) \leq \max \{ c_i x : x \in P_b \} - \min \{ c_i x : x \in
   P_b \} \leq c_i F_i b - c_i G_i b = w(P_b).
 \]
 Hence, Part \eqref{item:flat-dirs-a}, when applied to this triple,
 turns into
 \[
   \min \{ c_i x : x \in P_b \} = c_i G_i b \quad \text{and} \quad
   \max \{ c_i x : x \in P_b \} = c_i F_i b.
 \]

 For our further purposes, it is more suitable to have a \emph{unique}
 width direction for all polyhedra $P_b$ with varying $b$. In fact,
 using Lemma \ref{lem:flat-dirs}, we can partition the set of the
 right-hand sides $b$ into a number of partially open polyhedra such
 that the width direction remains the same for all $b$ belonging to
 the same region of the partition.

 \begin{theorem}
   \label{thm:flat-dirs}
   Let $P$ be a parametric polyhedron of finite lattice width, defined
   by a matrix $A \in \setQ^{m \times n}$ of full column rank. Let $Q
   \subseteq \setR^m$ be a rational partially open polyhedron such
   that $P_b$ is non-empty for all $b \in Q$. We can compute---in
   polynomial time, if $n$ is fixed---a partition of $Q$ into a number
   of partially open polyhedra $Q_1, \ldots, Q_t$ and, for each $i$,
   find a triple $(F_i, G_i, c_i)$ of linear transformations $F_i, G_i
   : \setR^m \to \setR^n$ and a non-zero vector $c_i \in \setZ^n$,
   such that
   \[
     \min \{ c_i x : x \in P_b \} = c_i G_i b, \quad \max \{ c_i x : x
     \in P_b \} = c_i F_i b,
   \]
   and
   \[
     w(P_b) = w_{c_i}(P_b) = c_i (F_i - G_i) b \quad \text{for all } b
     \in Q_i.
   \]
   If $\phi$ denotes the maximum size of a column in $A$, then $t = O
   ( m^{2 n} \phi^{n-1} )$.
 \end{theorem}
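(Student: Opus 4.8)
\noindent The plan is to run the algorithm of Lemma~\ref{lem:flat-dirs} on $A$; in polynomial time (for fixed $n$) it outputs triples $(F_1,G_1,c_1),\dots,(F_t,G_t,c_t)$ with $t$ bounded by~\eqref{eq:t-bound}, hence $t=O(m^{2n}\phi^{n-1})$. Writing $\ell_i(b):=c_i(F_i-G_i)b$ for the resulting linear functions of $b$, part~\eqref{item:flat-dirs-b} says that
\[
  w(P_b)=\min_{1\leq i\leq t}\ell_i(b)\qquad\text{whenever }P_b\neq\emptyset .
\]
Thus, on $\{b:P_b\neq\emptyset\}$, and in particular on $Q$, the function $w(\cdot)$ is the pointwise minimum of $t$ explicitly known linear functions, and the natural idea is to cut $Q$ into the regions on which a fixed $\ell_i$ realises this minimum.

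To obtain an honest \emph{partition} into \emph{partially open} polyhedra I would break ties by index, setting
\[
  Q_i:=\bigl\{\,b\in Q:\ \ell_i(b)<\ell_j(b)\ \text{for }j<i,\ \ \ell_i(b)\leq\ell_j(b)\ \text{for }j>i\,\bigr\}.
\]
Each $Q_i$ is the intersection of the partially open polyhedron $Q$ with $i-1$ open and $t-i$ closed rational half-spaces, and so is again a rational partially open polyhedron described by polynomially many inequalities of polynomial size; hence the $Q_i$, together with the triples $(F_i,G_i,c_i)$, are computed in polynomial time when $n$ is fixed. Every $b\in Q$ lies in exactly one $Q_i$, namely for the smallest index attaining $\min_j\ell_j(b)$, so the $Q_i$ do partition $Q$; one may discard any empty ones (testable by linear programming), which only decreases the count. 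In any case the number of parts is at most $t=O(m^{2n}\phi^{n-1})$.

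It remains to verify the three asserted identities on a region $Q_i$. For $b\in Q_i$ we have $\ell_i(b)=\min_j\ell_j(b)=w(P_b)$ by part~\eqref{item:flat-dirs-b}, while part~\eqref{item:flat-dirs-a} gives the chain
\[
  w(P_b)\leq w_{c_i}(P_b)=\max\{c_ix:x\in P_b\}-\min\{c_ix:x\in P_b\}\leq c_iF_ib-c_iG_ib=\ell_i(b)=w(P_b),
\]
the first inequality holding because $c_i$ is a non-zero integral vector. Therefore every inequality is an equality; in particular $c_i$ is a width direction of $P_b$ with $w(P_b)=w_{c_i}(P_b)=c_i(F_i-G_i)b$, and since $\max\{c_ix:x\in P_b\}\leq c_iF_ib$ and $-\min\{c_ix:x\in P_b\}\leq-c_iG_ib$ add up to an equality, each of them is an equality, i.e.\ $\max\{c_ix:x\in P_b\}=c_iF_ib$ and $\min\{c_ix:x\in P_b\}=c_iG_ib$. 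This is precisely the conclusion of the theorem, and is the remark following Lemma~\ref{lem:flat-dirs} localised to $Q_i$.

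The heavy lifting---bounding and computing the candidate directions---is already carried out in Lemma~\ref{lem:flat-dirs}, so this argument is essentially bookkeeping. The two points that require attention are the lexicographic tie-breaking, which is unavoidable since $Q$ cannot be partitioned by closed polyhedra alone, and the observation that the ``$\leq$''-chain of part~\eqref{item:flat-dirs-a} collapses to equalities exactly on the region where $c_i$ realises the lattice width. I do not anticipate any real obstacle beyond tracking the sizes of the defining inequalities of the $Q_i$.
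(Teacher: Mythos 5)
Your proposal is correct and follows essentially the same route as the paper: apply Lemma~\ref{lem:flat-dirs}, break ties lexicographically with strict inequalities against smaller indices and non-strict against larger ones to get the partially open partition, and note that the chain of inequalities in part~\eqref{item:flat-dirs-a} collapses to equalities on the region where $c_i$ attains the minimum. The only cosmetic difference is that you spell out this collapsing argument inside the proof, whereas the paper records it in the remark following Lemma~\ref{lem:flat-dirs}.
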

 
 \begin{remark}   
   The statement of Theorem \ref{thm:flat-dirs} is very analogous to
   \cite[Lemma~3.1]{Kannan1992}. However, there are several crucial
   differences. First, the number of regions in the partition obtained
   by Kannan's algorithm is exponential in $n$ and the affine
   dimension $j_0$ of the polyhedron $Q$. Our algorithm yields a
   partition that is exponential in $n$ only, hence polynomial if $n$
   is fixed. Also our algorithm runs in polynomial time if $n$ is
   fixed but $j_0$ may vary. At last, the width directions $c_i$
   obtained by the Kannan's algorithm satisfy, for any $b \in Q_i$,
   \[
     \text{either} \quad w_{c_i}(P_b) \leq 1 \quad \text{or} \quad
     w_{c_i}(P_b) \leq 2 \, w(P_b)
   \]
   In contrast, we compute the \emph{exact} width direction for each
   region in the partition. While the exact computation of width
   directions does not help much from the algorithmic point of view,
   removing the restriction on the dimension of $Q$ turns out to be
   the main step towards the claimed generalization of Kannan's
   algorithm.
 \end{remark}

 \begin{proof}[Proof of Theorem \ref{thm:flat-dirs}]
   First, we exploit the algorithm of Lemma \ref{lem:flat-dirs} to
   obtain the triples $(F_i, G_i, c_i)$, $i = 1, \ldots, t$, with $t =
   O (m^n \phi^{n-1})$. These triples provide the possible width
   directions of the parametric polyhedron $P$. For each $i = 1,
   \ldots, t$, we define a partially open polyhedron $Q_i$ by the
   inequalities
   \begin{align*}
     & c_i (F_i - G_i) b < c_j (F_j - G_j) b, \quad j = 1, \ldots, i -
     1, \\ & c_i (F_i - G_i) b \leq c_j (F_j - G_j) b, \quad j = i +
     1, \ldots, t.
   \end{align*}
   Thus,
   \[
     \min_j \, c_j (F_j - G_j) b = c_i (F_i - G_i) b
   \]
   for all $b \in Q_i$. We claim that the intersections of the
   partially open polyhedra $Q_i$ with $Q$ give the required
   partition.
   
   Indeed, let $b \in Q$ and let $\mu$ be the minimum value of $c_i
   (F_i - G_i) b$, $i = 1, \ldots, t$. Let $I$ denote the set of
   indices $i$ with $c_i (F_i - G_i) b = \mu$. Then $b \in Q_{i_0}$,
   where $i_0$ is the smallest index in $I$. Yet, suppose that $b \in
   Q$ belongs to two partially open polyhedra, say $Q_i$ and
   $Q_j$. Without loss of generality, we can assume $i < j$. But then
   we have
   \[
     c_i (F_i - G_i) b \leq c_j (F_j - G_j) b < c_i (F_i - G_i) b,
   \]
   where the first inequality is due to the fact $b \in Q_i$ and the
   second inequality follows from $b \in Q_j$; both together are a
   contradiction.

   For the width directions, Lemma \ref{lem:flat-dirs} implies that
   \[
     w(P_b) = \min_j \, c_j (F_j - G_j) b = c_i (F_i - G_i) b
   \]
   for all $b \in Q_i \cap Q$. This completes the proof.
 \end{proof}

 \noindent Theorem \ref{thm:flat-dirs} provides a unique width
 direction for each region $Q_i$ of the partition. This resolves the
 first problem of adapting the algorithm for integer linear
 programming in fixed dimension to the case of varying $b$, which was
 addressed in the introduction. However, we still need to deal with
 the hyper-planes $\{ x : c_i x = \lceil \beta \rceil + j \}$, where
 $\beta$ is the optimum value of the linear program $\min \{ c_i x : x
 \in P_b \}$. As mentioned above, $\beta$ can be expressed as a linear
 transformation of $b$, namely $\beta = c_i G_i b$. But $\lceil \beta
 \rceil$ is no more a \emph{linear} function of $b$, which makes the
 recursion complicated. \cite{Kannan1992} showed how to tackle this
 problem. We discuss it in the next section.

\section{Partitioning theorem and parametric integer programming}
\label{sec:partit}

 The proof of the following structural result follows from the proof
 of \cite[Theorem 4.1]{Kannan1992} if it is combined with Theorem
 \ref{thm:flat-dirs}.

 \begin{theorem}
   \label{thm:structural}
   Let $P$ be a parametric polyhedron of finite lattice width, defined
   by a rational matrix $A \in \setQ^{m \times n}$ of full column
   rank. Let $Q \subseteq \setR^m$ be a rational partially open
   polyhedron such that $P_b$ is non-empty for all $b \in Q$. We can
   compute---in polynomial time, if $n$ is fixed---a partition of $Q$
   into sets $S_1, \ldots, S_t$, and for each $i$, find a number of
   unimodular transformations $U_{ij} : \setR^n \to \setR^n$ and
   affine transformations $T_{ij} : \setR^m \to \setR^n$, $j = 1,
   \ldots, k_i$ such that
   \begin{enumerate}[(a)]
     \item
       each $S_i$ is the integer projection of a partially open
       polyhedron, $S_i = S'_i / \setZ^{l_i}$;
     \item
       for any $b \in S_i$, $P_b \cap \setZ^n \neq \emptyset$ if and
       only if $P_b$ contains $U_{ij} \lceil T_{ij} b \rceil$ for some
       index $j$;
     \item
       the following bounds hold:
       \[
         t = O ((m^{2 n} \phi^{n-1})^{n \overline{\omega}(n)}), \quad
         l_i = O (\overline{\omega}(n)), \quad k_i = O (2^{n^2 / 2}
         \overline{\omega}(n)), \quad i = 1, \ldots, t,
       \]
       where $\phi$ denotes the maximum size of a column in $A$ and
       $\overline{\omega}(n) = \prod_{i=1}^n \omega(n)$.
    \end{enumerate}
 \end{theorem}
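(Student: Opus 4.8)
The plan is to iterate the single‑variable elimination scheme sketched at the end of Section~\ref{sec:flatn-theor-integ}, using Theorem~\ref{thm:flat-dirs} at each level of the recursion to keep the width direction \emph{parametric but fixed} on each region, and using Kannan's technique to linearise the roundup $\lceil \beta\rceil$. First I would apply Theorem~\ref{thm:flat-dirs} to $P$ and $Q$, obtaining a partition $Q = Q_1 \cup \dots \cup Q_{t_0}$ with $t_0 = O(m^{2n}\phi^{n-1})$, and for each region a triple $(F_i,G_i,c_i)$ with $w(P_b) = c_i(F_i-G_i)b$ and $c_i G_i b = \min\{c_i x : x\in P_b\} =: \beta(b)$, a \emph{linear} function of $b$. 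Fix one region $Q_i$. By Lemma~\ref{lem:flatness-app}, for $b\in Q_i$ the polyhedron $P_b$ contains an integral point if and only if one of the slices $P_b \cap \{x : c_i x = \lceil\beta(b)\rceil + r\}$, $r = 0,\dots,\omega(n)$, does. Since $c_i$ has coprime entries, its Hermite normal form is $e_1$, so there is a unimodular $U_i$ with $c_i U_i = e_1$; substituting $x = U_i y$ turns each slice into a system in the variables $y_2,\dots,y_n$ only, with right‑hand side an affine function of $b$ and of the integer $\lceil\beta(b)\rceil + r$.

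The obstacle is that $\lceil \beta(b)\rceil$ is not linear in $b$, which is exactly the point Kannan resolves. Write $\beta(b) = \beta(b) - \lfloor \beta(b)\rfloor + \lfloor\beta(b)\rfloor$ and introduce the fractional‑part variable: augment $b$ by a new integer variable $z$ with $z \leq \beta(b) < z+1$, i.e. $\lfloor\beta(b)\rfloor = z$ and $\lceil\beta(b)\rceil = z$ if $\beta(b)\in\setZ$ and $z+1$ otherwise. These two cases are handled by splitting each region further along the rational hyperplane $\{b : \beta(b)\in\setZ\}$ (a partially open polyhedron condition after clearing denominators, using a finite modulus) so that on each piece $\lceil\beta(b)\rceil = \alpha b + \text{const}$ becomes affine in the \emph{extended} vector $(b,z)$ with $z$ a projected‑out integer variable. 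Thus each original region is replaced by $O(\overline\omega(n)^{\,?})$ pieces, each the integer projection $S'/\setZ^{l}$ of a partially open polyhedron with $l$ increased by a bounded amount, and on each piece the slicing gives a finite list of candidate points of the form $U_{ij}\lceil T_{ij}b\rceil$ with $T_{ij}$ affine.

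Now recurse. On each slice we have an $(n-1)$‑dimensional parametric polyhedron (after the unimodular change of coordinates and fixing $y_1$) whose right‑hand side depends affinely on $(b,z)$ with a bounded number of projected integer variables; applying the whole construction again reduces the dimension by one. After at most $n$ levels the fibre is a point and feasibility is a direct check. At level $k$ the number of regions is multiplied by $O((m^{2n}\phi^{n-1})\cdot\overline\omega(n)\cdot\text{const})$ coming from Theorem~\ref{thm:flat-dirs} applied to the current system together with the $\omega(\cdot)$ choices of slice and the integrality split; since each width direction is a vertex of an integer hull with the stated bit‑size bound, these factors telescope to $t = O((m^{2n}\phi^{n-1})^{n\,\overline\omega(n)})$. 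Each recursive call adds $O(\overline\omega(n))$ projected integer variables, giving $l_i = O(\overline\omega(n))$; the branching into candidate points at each level is bounded by the number of slices times the number of unimodular representatives, which over $n$ levels yields $k_i = O(2^{n^2/2}\,\overline\omega(n))$ (the $2^{n^2/2}$ absorbing the unimodular/coordinate bookkeeping). Polynomiality for fixed $n$ follows because each level performs polynomially many calls to linear algebra, Hermite normal form, the integer‑hull vertex enumeration of Lemma~\ref{lem:flat-dirs}, and Lenstra's algorithm, and all the multiplicative blow‑ups are functions of $n$ alone. The main thing to get right is the bookkeeping of the integer projections $S_i = S'_i/\setZ^{l_i}$ across the recursion and verifying that the new integer variables introduced to linearise $\lceil\cdot\rceil$ are genuinely projectable, i.e. that the set described after eliminating them is still a partially open polyhedron — this is precisely Lemma~3.1 and Theorem~4.1 of \citep{Kannan1992}, which now go through verbatim with Theorem~\ref{thm:flat-dirs} supplying the width directions without any bound on the affine dimension of $Q$.
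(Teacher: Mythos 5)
Your proposal is correct and takes essentially the same route as the paper: the paper likewise proves Theorem~\ref{thm:structural} by plugging Theorem~\ref{thm:flat-dirs} into Kannan's recursive partitioning argument (slicing along $c_i x = \lceil \beta(b)\rceil + r$ via Lemma~\ref{lem:flatness-app}, linearising the roundups with projected integer variables and a bounded case split, and recursing on the dimension), deferring the detailed bookkeeping to the proof of Theorem~4.1 in \citep{Kannan1992} just as you do. The only substantive content the paper adds beyond your sketch is the two-dimensional illustration of how the candidate points $U_{ij}\lceil T_{ij}b\rceil$ arise from the highest upward-pointing constraint on each slice.
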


 We do not repeat Kannan's proof here but give an intuition of why it
 is true in dimension 2. By Theorem~\ref{thm:flat-dirs} we can assume
 that the width-direction is invariant for all $b \in Q$ and by
 applying a unimodular transformation we can further assume that this
 width-direction is the first unit-vector $e_1$, see Figure
 \ref{fig:dim2}.
 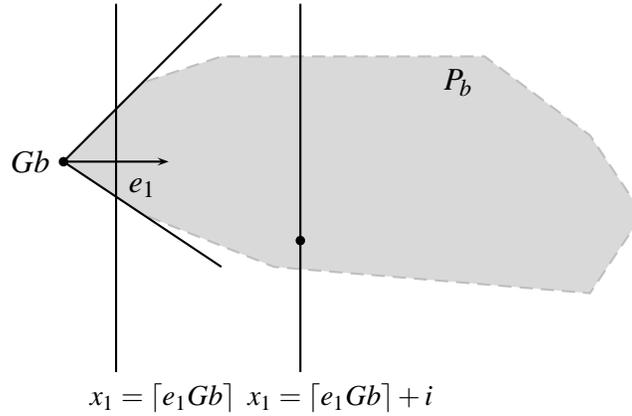
\begin{figure}[ht]
   \centering {
     \psset{unit=.7cm}
     \begin{pspicture}(-1,-5)(11,3)    
       \pspolygon[linestyle=dashed,linecolor=lightgray,
         fillstyle=solid,fillcolor=vvlg]
         (0,0)(1.5,1.5)(3,2)(8,2)(10,.5)(11,-1)(10,-2.5)(4,-2)(1.5,-1)
       \psline(0,0)(3,3)
       \psline(0,0)(3,-2)

       \psline[arrows=->](0,0)(2,0)

       \psline(1,-4)(1,3)
       \psline(4.5,-4)(4.5,3)
       \psdot*(4.5,-1.5)

       \rput(1.5,-.5){$e_1$}
       \rput(7.5,1.5){$P_b$}

       \rput[l](-1,0){$G b$}
       { \small
         \rput[l](.5,-4.5){$x_1=\lceil e_1 G b \rceil$}
         \rput[l](3.5,-4.5){$x_1=\lceil e_1 G b \rceil+i$}
       }
    
       \psdots(0,0)
     \end{pspicture}
     \caption{Illustration of Theorem~\ref{thm:structural} in dimension 2.}
     \label{fig:dim2}
   }
 \end{figure}

 Lemma~\ref{lem:flatness-app} tells that $P_b$ contains an integral
 point if and only if there exists an integral point on the lines $x_1
 = \lceil e_1 G b \rceil + j$ for $j = 0, \ldots, \omega(2)$. The
 intersections of these lines with the polyhedron $P_b$ are
 1-dimensional polyhedra. Some of the constraints $a x \leq \beta$ of
 $A x \leq b$ are ``pointing upwards'', i.e., $a e_2 < 0$, where $e_2$
 is the second unit-vector. Let $a_1 x_1+a_2x_2 \leq \beta$ be a
 constraint pointing upwards such that the intersection point $(\lceil
 e_1 G b \rceil + j, y)$ of $a_1 x_1+a_2x_2 = \beta$ with the line
 $x_1 = \lceil e_1 G b \rceil + j$ has the largest second
 component. The line $x_1 = \lceil e_1 G b \rceil + j$ contains an
 integral point in $P_b$ if and only if $(\lceil e_1 G b \rceil + j,
 \lceil y \rceil)$ is contained in $P_b$. This point is illustrated in
 Figure~\ref{fig:dim2}. By choosing the highest constraint pointing
 upwards for each line $x_1 =\lceil e_1 G b \rceil + j$, we partition
 the set of right-hand sides into polynomially many integer
 projections of partially open polyhedra.

 In order to express the candidate solution $(\lceil e_1 G b \rceil +
 j, \lceil y \rceil)$ in the form described in the theorem, observe
 that
 \[
   y =  (\beta - a_1 x_1) / a_2. 
 \]
 Since $x_1$ is an integer and
 \[
   x_1 = \lceil e_1 G b \rceil + j = e_1 G b + j + \gamma
 \]
 for some $\gamma \in [0,1)$, we can rewrite the equation $\lceil y
 \rceil = \lceil (\beta - a_1 x_1) / a_2 \rceil$ as
 \begin{align*}
   \lfloor a_1 / a_2 \rfloor x_1 + \lceil y \rceil &= \lceil \beta /
   a_2 - \{ a_1 / a_2 \} x_1 \rceil \\ &= \lceil \beta / a_2 - \{ a_1
   / a_2 \} (e_1 G b + j) - \{ a_1 / a_2 \} \gamma \rceil,
 \end{align*}
 where $\{ a_1 / a_2 \}$ denotes the fractional part of $a_1 /
 a_2$. Since $\{ a_1 / a_2 \} \gamma$ lies between 0 and 1, it
 suffices to check independently two different possibilities, namely,
 \[
   \lfloor a_1 / a_2 \rfloor x_1 + \lceil y \rceil = \lceil \beta /
   a_2 - \{ a_1 / a_2 \} (e_1 G b + j) \rceil
 \]
 and
 \[
   \lfloor a_1 / a_2 \rfloor x_1 + \lceil y \rceil = \lceil \beta /
   a_2 - \{ a_1 / a_2 \} (e_1 G b + j) - 1 \rceil.
 \]
 Combined with $x_1 = \lceil e_1 G b \rceil + j$, each of the above
 equations yields a unimodular system with respect to the variables
 $x_1$ and $\lceil y \rceil$, with the right-hand side being the
 round-up of an affine transformation of $b$. We refer the reader to
 \citep{Kannan1992} to see the complete proof for arbitrary dimension.

\subsection*{\pmb{$\forall \,\exists$}-statements} 

 Theorem \ref{thm:structural} gives rise to a polynomial algorithm for
 testing sentences of the form
 \begin{equation}
   \label{eq:pilp-appl}
   \forall b \in Q / \setZ^p \quad \exists x \in \setZ^n : \quad A x
   \leq b,
 \end{equation}
 when $p$ and $n$ are fixed. This algorithm was first described by
 \cite{Kannan1992} but he required, in addition, the affine dimension
 of $Q$ to be fixed. Our improvement follows basically from the
 improvement in the partitioning theorem, while the algorithm itself
 remains exactly the same. We describe it here for the sake of
 completeness.  First observe that we can assume that $A$ has full
 column rank. Otherwise we can apply a unimodular transformation of
 $A$ from the right to obtain a matrix $[A' \mid 0]$, where $A'$ has
 full column rank.

 The idea is as follows: First we run the algorithm of Theorem
 \ref{thm:structural} on input $A$ and $Q' \subseteq \setR^m$, where
 $Q'$ is the set of vectors $b$, for which the system $A x \leq b$ has
 a solution. Then we consider each set $S_i$ returned by the algorithm
 of Theorem \ref{thm:structural} independently. For each $b \in S_i$
 we have a fixed number of candidate solutions for the system $A x
 \leq b$, defined via unimodular and affine transformations as $U_{ij}
 \lceil T_{ij} b \rceil$. Each rounding operation can be expressed by
 introducing an integral vector: $z = \lceil T_{ij} b \rceil$ is
 equivalent to $T_{ij} b \leq z < T_{ij} b + \mathbf{1}$. We need only
 a constant number of integer variables to express all candidate
 solutions plus a fixed number of integer variables to represent the
 integer projections $S_i = S'_i / \setZ^{l_i}$. It remains to solve a
 number of mixed-integer programs, to which we also include the
 constraints $(b, y) \in Q$, $y \in \setZ^p$.

 \begin{theorem}
   \label{thm:main}
   There is an algorithm that, given a rational matrix $A \in \setQ^{m
   \times n}$ and a rational polyhedron $Q \subseteq \setR^{m+p}$,
   decides the sentence \eqref{eq:pilp-appl}. The algorithm runs in
   polynomial time if $p$ and $n$ are fixed.
 \end{theorem}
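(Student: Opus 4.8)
The plan is to assemble the machinery developed in the preceding sections into a decision procedure and to argue that each of its stages runs in polynomial time when $p$ and $n$ are fixed. First I would reduce to the case in which $A$ has full column rank: apply a unimodular transformation from the right so that $A$ becomes $[A' \mid 0]$ with $A'$ of full column rank; the free variables introduced this way can be eliminated (integrality is preserved under the unimodular change of coordinates), so it suffices to decide the sentence for $A'$. I would also handle separately the ``trivial'' case where $P_0$ has infinite lattice width: by Lemma \ref{lem:infinite-width}, $P_b$ then contains an integral point for every $b$, so the sentence \eqref{eq:pilp-appl} holds and this case is recognised by enumerating pairs of bases and checking whether the cones \eqref{eq:cones-width} share an integral vector. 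So we may assume $P$ has finite lattice width.

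Next I would compute the set $Q' \subseteq \setR^m$ of right-hand sides $b$ for which $A x \leq b$ is (real) feasible; by linear programming duality this is itself a rational polyhedron, described by the inequalities $y b \ge 0$ over the (finitely many) extreme rays $y$ of the dual cone $\{y \geq 0 : y A = 0\}$, and it can be produced in polynomial time in fixed dimension. Running the algorithm of Theorem \ref{thm:structural} on $A$ and $Q'$ yields the partition $S_1, \dots, S_t$ together with the transformations $U_{ij}, T_{ij}$, where $t$, the projection depths $l_i$, and the numbers $k_i$ of candidate solutions are all bounded as in part (c) of that theorem — hence polynomially bounded when $n$ is fixed. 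By property (b), for $b \in S_i$ the system $A x \leq b$ is integer feasible if and only if $P_b$ contains $U_{ij}\lceil T_{ij} b\rceil$ for some $j \in \{1,\dots,k_i\}$. The negation of \eqref{eq:pilp-appl} is therefore: there exists an index $i$ and a vector $b$ lying in $S_i$ and in the integer projection $Q/\setZ^p$, such that for \emph{every} $j$ the candidate $U_{ij}\lceil T_{ij} b\rceil$ violates at least one inequality of $A x \leq b$.

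For a fixed $i$, I would encode this as a bounded family of mixed-integer feasibility problems. Each rounding $z_{ij} = \lceil T_{ij} b\rceil$ is linearised by the constraints $T_{ij} b \le z_{ij} < T_{ij} b + \mathbf 1$ with $z_{ij} \in \setZ^n$; the membership $b \in S_i = S_i'/\setZ^{l_i}$ costs $l_i$ further integer variables; the membership $b \in Q/\setZ^p$ costs $p$ more; and the constraint ``$U_{ij}\lceil T_{ij} b\rceil$ violates inequality number $r_j$ of $A x \le b$'' is linear in $b$ and $z_{ij}$ once the violated row $r_j$ is selected. Enumerating the at most $m^{k_i}$ choices of the function $j \mapsto r_j$ gives, for each $i$, that many mixed-integer programs, each with at most $n k_i + l_i + p = O(\overline\omega(n)\,2^{n^2/2} n + p)$ integer variables — a constant when $n$ and $p$ are fixed. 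The sentence \eqref{eq:pilp-appl} is false if and only if at least one of these $\sum_i m^{k_i}$ mixed-integer programs is feasible, and each is decided in polynomial time by Lenstra's algorithm \citep{Lenstra1983}, which applies to partially open polyhedra as well. Summing, the overall running time is polynomial for fixed $p$ and $n$.

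The main obstacle is not any single step but marshalling the bounds so that the \emph{product} $t \cdot \max_i m^{k_i}$ of the number of regions and the number of violation-assignments per region stays polynomial, and checking that the total number of integer variables in each resulting mixed-integer program is bounded by a constant depending only on $n$ and $p$; once Theorem \ref{thm:structural} supplies the partition with its stated bounds, this is a routine but necessary bookkeeping verification, after which the correctness equivalence between ``\eqref{eq:pilp-appl} fails'' and ``some mixed-integer program in the family is feasible'' follows directly from property (b).
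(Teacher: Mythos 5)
Your overall route is the same as the paper's: compute the set $Q'$ of right-hand sides for which $Ax\le b$ has a fractional solution, apply Theorem~\ref{thm:structural} to partition $Q'$ into the integer projections $S_i$, encode each rounding $\lceil T_{ij}b\rceil$ and each integer projection with a bounded number of integer variables, enumerate the $m^{k_i}$ assignments of violated rows, and decide the resulting mixed-integer programs with Lenstra's algorithm (your use of Farkas/extreme rays instead of Fourier--Motzkin to get $Q'$ is an immaterial variation). However, there is a genuine gap in your reformulation of the negation of \eqref{eq:pilp-appl}. The sets $S_1,\ldots,S_t$ partition only $Q'$, so your family of mixed-integer programs can only detect counterexamples $b\in Q/\setZ^p$ for which $Ax\le b$ is feasible as a \emph{linear} program. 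But \eqref{eq:pilp-appl} also fails when some $b\in Q/\setZ^p$ lies outside $Q'$, i.e.\ when $P_b=\emptyset$; in that situation every program in your family is infeasible and your algorithm would wrongly answer ``yes''. Concretely, for $n=1$, $p=0$, $A$ with the two rows $1$ and $-1$, and $Q=\{(0,-1)\}$, the sentence is false because $P_{(0,-1)}=\emptyset$, yet $Q/\setZ^0$ is disjoint from $Q'=\{b: b_1+b_2\ge 0\}$, so none of your mixed-integer programs is feasible.

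The paper closes exactly this hole as the first step of its proof: for every inequality $ab\le\beta$ in the description of $Q'$ it solves the mixed-integer program $ab>\beta$, $(b,y)\in Q$, $y\in\setZ^p$ (only $p$ integer variables), and outputs ``no'' if any of these is feasible; only after that may one assume $Q/\setZ^p\subseteq Q'$ and argue as you do. The same caveat applies to your shortcut for the infinite-lattice-width case: Lemma~\ref{lem:infinite-width} and the surrounding discussion guarantee integral points only in \emph{non-empty} polyhedra $P_b$, so the inclusion $Q/\setZ^p\subseteq Q'$ must still be verified before answering ``yes'' there as well. With this check added, the rest of your argument (reduction to full column rank, the count of roughly $nk_i+l_i+p$ integer variables per program, and the polynomial bound on $\sum_i m^{k_i}$ programs for fixed $n$ and $p$) matches the paper's proof.
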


 \begin{proof}
   Let $P$ be a parametric polyhedron defined by the matrix
   $A$. First, we exploit the Fourier--Motzkin elimination procedure
   to construct the polyhedron $Q' \subseteq \setR^m$ of the
   right-hand sides $b$, for which the system $A x \leq b$ has a
   (fractional) solution. For each inequality $a b \leq \beta$,
   defining the polyhedron $Q'$, we can solve the following
   mixed-integer program
   \[
     a b > \beta, \quad (b, y) \in Q, \quad y \in \setZ^p,
   \]
   and if any of these problems has a feasible solution $(y, b)$, then
   $b$ is a vector in $Q / \setZ^p$, for which the system $A x \leq b$
   has no integral solution. Hence, we can terminate and output ``no''
   (with $b$ being a certificate).

   We can assume now that for all $b \in Q / \setZ^p$ the system $A x
   \leq b$ has a fractional solution. By applying the algorithm of
   Theorem \ref{thm:structural}, we construct a partition of $Q'$ into
   the sets $S_1, \ldots, S_t$, where each $S_i$ is the integer
   projection of a partially open polyhedron, $S_i = S'_i /
   \setZ^{l_i}$. Since $n$ is fixed, the $l_i$ are bounded by some
   constant, $i = 1, \ldots, t$. Furthermore, for each $i$, the
   algorithm constructs unimodular transformations $U_{ij}$ and affine
   transformations $T_{ij}$, $j = 1, \ldots, k_i$, such that $P_b$,
   with $b \in S_i$, contains an integral point if and only if $U_{ij}
   \lceil T_{ij} b \rceil \in P_b$ for some $j$. Again, $k_i$ is fixed
   for a fixed $n$, $i = 1, \ldots, t$.

   The algorithm will consider each index $i$ independently. For a
   given $i$, $S_i$ can be described as the set of vectors $b$ such
   that
   \[
     (b, z) \in S'_i
   \]
   has a solution for some integer $z \in \setZ^{l_i}$. This can be
   expressed in terms of linear constraints, as $S'_i$ is a partially
   open polyhedron. Let $x_j = U_{ij} \lceil T_{ij} b \rceil$. The
   points $x_j$ can be described by linear inequalities as
   \[
     T_{ij} b \leq z_j < T_{ij} b + \mathbf{1}, \quad x_j = U_{ij}
     z_j,
   \]
   where $\mathbf{1}$ is the all-one vector. Then $P_b$ does not
   contain an integral point if and only if $x_j \notin P_b$ for all
   $j = 1, \ldots, k_i$. In this case, each $x_j$ violates at least
   one constraint in the system $A x \leq b$. We consider all possible
   tuples $I$ of $k_i$ constraints from $A x \leq b$. Obviously, there
   are only $m^{k_i}$ such tuples, that is, polynomially many in the
   input size. For each such tuple, we solve the mixed-integer program
   \[ \begin{array}{ll}
     (b, y) \in Q, \, (b, z) \in S'_i, \medskip \\ T_{ij} b \leq z_j <
     T_{ij} b + \mathbf{1}, & j = 1, \ldots, k_i, \medskip \\ x_j =
     U_{ij} z_j, & j = 1, \ldots, k_i, \medskip \\ a_{i_j} x_j >
     b_{i_j}, & j = 1, \ldots, k_i, \medskip \\ y \in \setZ^p, \, z
     \in \setZ^{l_i}, \, z_j \in \setZ^n, & j = 1, \ldots, k_i,
   \end{array} \]
   where $a_{i_j} x \leq b_{i_j}$ is the $j$-th constraint in the
   chosen tuple. Each such mixed-integer program can be solved in
   polynomial time since the number of integer variables is fixed (in
   fact, there are at most $(k_i + 1) n + l_i$ integer variables).

   If there is a feasible solution $b$ to one of these mixed-integer
   programs, then the answer to the original problem is ``no'' (with
   $b$ being a certificate). If all these mixed-integer programs are
   infeasible, the answer is ``yes''.
 \end{proof}

 \begin{remark}
   \label{rem:1}
   We would like to point out that Theorem \ref{thm:main} can also be
   proved differently. \cite{Bell77} and \cite{Scarf77} showed that if
   a system of linear inequalities $A x \leq b$ has no integral
   solution, then there is already a subsystem of at most $2^n$
   inequalities that is infeasible in integer variables; see also
   \cite[Theorem 16.5]{Schrijver1986}. Applied to \eqref{eq:pilp} this
   means that there exists a $b \in Q / \setZ^p$ such that the system
   $A x \leq b$ has no integral solution, if and only there exist a $b
   \in Q / \setZ^p$ and a subsystem $A' x \leq b'$ of $A x \leq b$
   with at most $2^n$ inequalities, which is infeasible in integer
   variables. Here $b'$ is the projection of $b$ into the according
   space. Since $n$ is a constant, we can try out all $\binom{m}{2^n}$
   different subsystems of $A x \leq b$ and apply to each of these
   parametric polyhedra Kannan's algorithm to decide $\forall\,
   \exists$-statements.

   However, a similar argument does not yield our extension (Theorem
   \ref{thm:structural}) of Kannan's partitioning theorem itself,
   which associates to each $b$ a fixed set of candidate integer
   solutions, depending on the partially open polyhedron of the
   partitioning, in which $b$ is contained.
 \end{remark}

\section{Integer programming gaps} 
 
 Now, we describe how Theorem \ref{thm:main} can be applied to compute
 the maximum integer programming gap for a family of integer programs.
 Let $A \in \setQ^{m \times n}$ be a rational matrix and let $c \in
 \setQ^n$ be a rational vector. Let us consider the integer programs
 of the form
 \begin{equation}
   \label{eq:ilp}
   \max \{ c x : A x \leq b, \, x \in \setZ^n \},
 \end{equation}
 where $b$ is varying over $\setR^m$. The corresponding linear
 programming relaxations are then
 \begin{equation}
   \label{eq:lp}
   \max \{ c x : A x \leq b \}.
 \end{equation}
 Consider the following system of inequalities:
 \begin{align*}
   c x & \geq \beta, \medskip \\ A x & \leq b.
 \end{align*}
 Given a vector $b$ and a number $\beta$, there exists a feasible
 fractional solution of the above system if and only if the linear
 program \eqref{eq:lp} is feasible and its value is at least $\beta$.
 The set of pairs $(\beta, b) \in \setR^{m+1}$, for which the above
 system has a fractional solution, is a polyhedron in $\setR^m$ and
 can be computed by means of Fourier--Motzkin elimination, in
 polynomial time if $n$ is fixed. Let $Q$ denote this polyhedron.

 Suppose that we suspect the maximum integer programming gap to be
 smaller than $\gamma$. This means that, whenever $\beta$ is an
 optimum value of \eqref{eq:lp}, the integer program \eqref{eq:ilp}
 must have a solution of value at least $\beta -
 \gamma$. Equivalently, the system
 \begin{align}
   \label{eq:gap-system}
   c x & \geq \beta - \gamma, \medskip \\ A x &\leq b, \nonumber
 \end{align}
 must have an integral solution. If there exists $(b, \beta) \in Q$
 such that \eqref{eq:gap-system} has no integral solution, the integer
 programming gap is bigger than $\gamma$. We also need to ensure that
 for a given $b$, the integer program is feasible, i.e., the system $A
 x \leq b$ has a solution in integer variables.

 Now, this is exactly the question for the algorithm of Theorem
 \ref{thm:main}: Is there a $(\beta, b) \in Q'$ such that the system
 \eqref{eq:gap-system} has no integral solution, but there exists $y
 \in \setZ^n$ such that $A y \leq b$? Here $Q' = Q - \gamma (1, 0)$ is
 the appropriate translate of the set $Q$. If the algorithm answers
 ``no'' with the certificate $b$, then the integer program
 \eqref{eq:ilp}, with the right-hand side $b$, has no solution of
 value greater than $\beta - \gamma$ while being feasible. On the
 other hand, $(\beta, b) \in Q$, thus the corresponding linear
 solution has optimum value of at least $\beta$. We can conclude that
 the maximum integer programming gap is greater than $\gamma$. This
 gives us the following theorem.

 \begin{theorem}
   \label{thm:gap}
   There is an algorithm that, given a rational matrix $A \in \setR^{m
   \times n}$, a rational row-vector $c \in \setQ^n$ and a number
   $\gamma$, checks whether the maximum integer programming gap for
   the integer programs \eqref{eq:ilp} defined by $A$ and $c$ is
   bigger than $\gamma$. The algorithm runs in polynomial time if the
   rank of $A$ is fixed.
 \end{theorem}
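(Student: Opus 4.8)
The plan is to reduce the question to a single $\forall\,\exists$-query of the kind decided by Theorem~\ref{thm:main}, after first bringing the number of variables down from $n$ to $r \coloneqq \mathrm{rank}(A)$ --- and it is this last reduction that replaces ``$n$ fixed'' by ``$\mathrm{rank}(A)$ fixed''.

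\emph{Reduction to full column rank.} Compute a unimodular matrix $U$ with $AU = [\,\bar A \mid 0\,]$, where $\bar A \in \setQ^{m \times r}$ has full column rank --- the Hermite-normal-form computation already used in the proof of Theorem~\ref{thm:main}. Substituting $x = Uy$ and splitting $y = (x', x'')$ with $x' \in \setR^{r}$, the system $Ax \leq b$, $x \in \setZ^n$ is solvable if and only if $\bar A x' \leq b$, $x' \in \setZ^{r}$ is solvable, and $cx = (cU)y$. Using a linear program, first test whether the relaxation \eqref{eq:lp} is bounded on feasible right-hand sides --- equivalently, whether $cd \leq 0$ for every $d$ with $Ad \leq 0$. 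If it is unbounded, then, $A$ and $c$ being rational, there is an integral $d$ with $Ad \leq 0$ and $cd > 0$, so \eqref{eq:ilp} is also unbounded whenever it is feasible; this degenerate case is dealt with directly. Otherwise \eqref{eq:lp} is bounded, which forces $cd = 0$ for every $d$ in the kernel of $A$, that is, $c$ lies in the row space of $A$; hence $cU = [\,\bar c \mid 0\,]$ with $\bar c \in \setQ^{r}$ and $cx = \bar c x'$ after the substitution. From here on we work only with $\bar A$, $\bar c$ and the $r$ variables $x'$.

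\emph{Setting up the query.} Following the discussion preceding the statement, apply Fourier--Motzkin elimination to the system $\bar c x' \geq \beta$, $\bar A x' \leq b$, eliminating the $r$ variables $x'$, to obtain the rational polyhedron $Q \subseteq \setR^{1+m}$ of all pairs $(\beta, b)$ for which \eqref{eq:lp} is feasible with optimum value at least $\beta$; since $r$ is fixed this is polynomial. Let $\hat A$ be the $(m+1) \times r$ matrix obtained from $\bar A$ by prepending the row $-\bar c$, so that the system \eqref{eq:gap-system} reads $\hat A x' \leq \hat b$ with $\hat b = (\gamma - \beta,\, b)$; note $\hat A$ still has full column rank $r$. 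Introduce auxiliary integer variables $w \in \setZ^{r}$ and form the rational polyhedron $\hat Q \subseteq \setR^{(1+m)+r}$ in the variables $(\hat b, w)$ cut out by $(\beta, b) \in Q$ (rewritten in terms of $\hat b$) together with $\bar A w \leq b$; its integer projection $\hat Q / \setZ^{r}$ is precisely the set of right-hand sides $\hat b = (\gamma - \beta, b)$ for which $(\beta, b) \in Q$ and \eqref{eq:ilp} is feasible at $b$. Now run the algorithm of Theorem~\ref{thm:main} on $\hat A$ and $\hat Q$, with $p = r$, to decide
\[
  \forall\, \hat b \in \hat Q / \setZ^{r} \quad \exists\, x' \in \setZ^{r} : \quad \hat A x' \leq \hat b .
\]
Since both the number of columns of $\hat A$ and the parameter $p$ equal $r$, this call runs in polynomial time.

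\emph{Reading off the answer, and the main obstacle.} If the algorithm answers ``yes'', then for every $b$ at which \eqref{eq:ilp} is feasible and every $\beta$ with $(\beta, b) \in Q$ the system \eqref{eq:gap-system} has an integral solution; taking $\beta$ to be the optimum value of \eqref{eq:lp} (which is finite and attained, as \eqref{eq:lp} is bounded and rational) shows that the gap at $b$ is at most $\gamma$, so we report ``not bigger than $\gamma$''. If the algorithm answers ``no'' with a certificate $\hat b = (\gamma - \beta, b)$ and a witness $w \in \setZ^{r}$, then \eqref{eq:ilp} is feasible at $b$, the optimum of \eqref{eq:lp} is at least $\beta$, and \eqref{eq:gap-system} has no integral solution, so the gap at $b$ exceeds $\gamma$ and we report ``bigger than $\gamma$''. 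The only genuinely new ingredient over the discussion preceding the statement is the variable-reduction step: on its own Theorem~\ref{thm:main} gives polynomiality for fixed $n$, and it is the unimodular transformation --- together with the observation that boundedness of \eqref{eq:lp} already forces $c$ into the row space of $A$, so that only $r$ columns remain --- that lets us trade ``$n$ fixed'' for ``$\mathrm{rank}(A)$ fixed''. The rest is routine: one verifies that Fourier--Motzkin elimination of $r$ variables and the basis enumeration inside Theorem~\ref{thm:main} stay polynomial because $r$ is fixed, and that the degenerate cases --- no $b$ makes \eqref{eq:ilp} feasible, or \eqref{eq:lp} is structurally unbounded, or the parametric polyhedron defined by $\hat A$ has infinite lattice width (in which case \eqref{eq:gap-system} is integrally feasible whenever feasible; cf.\ Section~\ref{sec:width}) --- are answered correctly.
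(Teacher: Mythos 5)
Your proposal follows essentially the same route as the paper: Fourier--Motzkin elimination yields the polyhedron $Q$ of pairs $(\beta,b)$, and a single call to the algorithm of Theorem~\ref{thm:main} on the system \eqref{eq:gap-system}, with integral feasibility of $Ax\leq b$ encoded through additional integer variables in the integer projection, decides whether the gap exceeds $\gamma$ (binary search then recovers the gap itself). The only addition is your explicit reduction to $r=\mathrm{rank}(A)$ variables---including the observation that boundedness of \eqref{eq:lp} forces $c$ into the row space of $A$---a preprocessing step the paper leaves implicit but which is precisely what justifies stating the running time in terms of fixed rank rather than fixed $n$.
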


 \noindent Using binary search, we can also find the \emph{minimum}
 possible value of $\gamma$, hence the maximum integer programming
 gap.

\end{document}